\newtheorem{theorem}{Theorem}[section]
\newtheorem{lemma}[theorem]{Lemma}
\newtheorem{corollary}[theorem]{Corollary}
\newtheorem{proposition}[theorem]{Proposition}
\newtheorem*{theorem*}{Theorem}
\theoremstyle{definition}
\newtheorem{definition}[theorem]{Definition}
\newtheorem{example}[theorem]{Example}
\theoremstyle{remark}
\newtheorem{remark}[theorem]{Remark}
\begin{document}
\title{Mutations and derived equivalences\\ for commutative noetherian rings}

\author{Jorge Vitória}


\address{Jorge Vit\'oria, Dipartimento di Matematica ''Tullio Levi-Civita``, Universit\`a degli Studi di Padova, via Trieste 63, 35131 Padova, Italy}
\email{jorge.vitoria@unipd.it}

\subjclass[2020]{16E35,13E09}
\keywords{Mutation, Cosilting complex, Torsion pair, t-structure, Prime spectrum}


\begin{abstract}
The representation theory of a commutative noetherian ring is tightly controlled by its prime spectrum. In this article we use the prime spectrum to describe mutation of cosilting objects in the derived category of a commutative noetherian ring. 
\end{abstract}
\maketitle

\section{Introduction}
Arguably the most popular use of the word ``mutation'' in science is in genetics. The same word has been used in algebra since the 80s for a process that, figuratively, comes close to that used in biology. A DNA mutation is a process that turns a DNA sequence into another one through various changes, three of which are dubbed \textit{deletion}, \textit{substitution} and \textit{insertion}. In algebra, mutation often stands for a transformation of an algebraic object (exceptional collections, cluster variables, cluster-tilting objects, silting or cositling complexes, support $\tau$-tilting modules, etc...), encompassing deletion, substitution and insertion in a sequential way in order to produce a new object of the same nature.

Our focus is on the recently developed concept of mutation for the class of bounded cosilting complexes in the derived category of a ring. These complexes parametrise an important class of t-structures, and it is through this lens that the operation of mutation is conceptually better understood. Nevertheless, it is often difficult to describe mutations explicitly. This is due to the ``substitution'' step of the process: it relies on approximation theory, and it may be challenging to pin down such approximations. This can be overcome for some particular rings. For finite-dimensional algebras over a field there is a wide range of combinatorial and homological techniques to help with this task (see, for example, \cite{AI,AIR,ALSV}). In this article, after reviewing the necessary tools to understand mutation conceptually, we aim to describe this process as accurately as possible in the setting of the derived category of a commutative noetherian ring. In this setting, we do not have the combinatorial techniques of finite-dimensional algebras, but we have a topological tool instead: the prime spectrum. 

It is well-known that the prime spectrum of a commutative noetherian ring $R$ controls much of its representation theory via the notion of support. Celebrated results include Gabriel's classification of hereditary torsion classes of $\mathsf{Mod}(R)$ (\cite{G}) and Neeman's analogous classification result for localising and smashing subcategories of the derived category $\mathsf{D}(\mathsf{Mod}(R))$ (\cite{N}). More relevant for us is the classification of compactly generated t-structures of \cite{AJS}. This is done in terms of certain decreasing sequences of subsets of $\mathsf{Spec}(R)$. Recently, it was shown in \cite{HN} that these same sequences classify pure-injective cosilting objects, opening the door to a concrete approach to mutation in this setting. 

This paper is structured as follows: we start in Section \ref{prelim} by setting up some notation and reviewing some facts about t-structures and torsion pairs; in Section \ref{der cat} we survey the classification of (intermediate) compactly generated t-structures for a commutative noetherian ring and its relation with cosilting theory; in Section \ref{mutation section} we review the main principles of the theory of mutation for cosilting objects, based on \cite{ALSV}. In Section \ref{der eq section} we review some ideas concerning derived equivalences and survey results from \cite{PaV} concerning t-structures inducing derived equivalences in the derived category of a commutative noetherian ring. Finally, in Section \ref{comm mutation} we use a classification result made available in Section \ref{der eq section} to discuss explicit some computations of cosilting mutation in the derived category of a commutative noetherian ring. 

The material surveyed in this article is essentially contained in \cite{PaV} and \cite{ALSV}. We refer to the article of Rosanna Laking in this same volume (\cite{L-Proc}) concerning other aspects of this same concept of mutation in the context of finite-dimensional algebras.

\section{Preliminaries}\label{prelim}

\subsection{Notation}
All subcategories considered are strict and full. In an additive category $\mathcal{A}$ with arbitrary (set-indexed) products, given an object $X$ we denote by $\mathsf{Prod}(X)$ the subcategory formed by all summands of products of $X$. In the following let $\mathcal{X}$ and $\mathcal{Y}$ be subcategories of $\mathcal{A}$. We denote by $\mathcal{X}^\perp$ the subcategory of $\mathcal{A}$ formed by the objects $Y$ for which $\mathsf{Hom}_\mathcal{A}(X,Y)=0$ for all $X$ in $\mathcal{X}$. The notation ${}^\perp\mathcal{X}$ stands for the dual definition. 

If $\mathcal{A}$ is triangulated and $I$ is a subset of $\mathbb{Z}$, then we define 
\begin{equation}\nonumber
\begin{split}
\mathcal{X}^{\perp_I}:=&\{Y\in\mathcal{A}\colon \mathsf{Hom}(X,Y[i])=0,\ \forall i\in I\}\\
{}^{\perp_I}\mathcal{X}:=&\{Y\in\mathcal{A}\colon \mathsf{Hom}(Y,X[i])=0,\ \forall i\in I\}.
\end{split}
\end{equation}
We often replace $I$ by the symbols $\geq 0$, $\leq 0$, $>0$, $<0$, $\neq 0$, with the obvious meaning. We write $\mathcal{X}\ast \mathcal{Y}$ for the subcategory formed by all objects $A$ of $\mathcal{A}$ for which there are $X$ in $\mathcal{X}$, $Y$ in $\mathcal{Y}$ and a triangle
\begin{equation}\nonumber
\xymatrix{X\ar[r]&A\ar[r]&Y\ar[r]&X[1].}
\end{equation}

If $\mathcal{A}$ is abelian, we denote its derived category by $\mathsf{D}(\mathcal{A})$ and its bounded derived category by $\mathsf{D}^b(\mathcal{A})$. If $\mathcal{A}$ is cocomplete, we consider the subcategory of \textbf{finitely presented objects} $\mathsf{fp}(\mathcal{A})$ formed by those $X$ for which $\mathsf{Hom}_\mathcal{A}(X,-)$ commutes with direct limits. Particular focus will be given to \textbf{Grothendieck abelian categories}, i.e., cocomplete abelian categories with exact direct limits and a generator. Recall that a Grothendieck category $\mathcal{A}$ is said to be \textbf{locally coherent} if there is a generating set of finitely presented objects and $\mathsf{fp}(\mathcal{A})$ is abelian. For a ring $R$, we denote by $\mathsf{Mod}(R)$ the category of right $R$-modules and, if $R$ is coherent, we denote by $\mathsf{mod}(R)$ the subcategory $\mathsf{fp}(\mathsf{Mod}(R))$. We will use the shorter version $\mathsf{D}(R)$ for the derived category $\mathsf{D}(\mathsf{Mod}(R))$, and $\mathsf{D}^b(R)$ for the bounded derived category $\mathsf{D}^b(\mathsf{mod}(R))$.

\subsection{Comments on the setup}
We will mostly be working on the derived category of a commutative noetherian ring $R$. Nevertheless, some of our results (mostly introductory ones) hold in a much larger generality and when this generality comes at very little cost to the reader, we shall state the result in such general setting. At points, nevertheless, we may choose to state a result in a more restricted setting, should that contribute to ease the reading of some technical points. We will refer to the literature when appropriate. 

A general framework that we commonly use is that of compactly generated triangulated categories. Recall that a triangulated category $\mathcal{D}$ is \textbf{compactly generated} if if admits set-indexed coproducts and the full subcategory of compact objects, denoted by $\mathcal{D}^c$, is a generating subcategory, i.e., if $(\mathcal{D}^c)^\perp=0$. In a compactly generated triangulated category we have a natural notion of pure-injective objects: an object $X$ is said to be \textbf{pure-injective} if the functor $\mathbf{y}X:=\mathsf{Hom}_\mathcal{D}(-,X)_{|\mathcal{D}^c}$ is an injective functor in the locally coherent Grothendieck category $\mathsf{Mod}(\mathcal{D}^c)$ of contravariant additive functors from $\mathcal{D}^c$ to the category of abelian groups. For more details on this functor category, pure-injective objects and the theory of purity in compactly generated triangulated categories in general, we refer to \cite{Herzog} and \cite{Krause-TC}. Note that the derived category of the category of left/right modules over any ring is a compactly generated triangulated category, whose compact objects are precisely those isomorphic to bounded complexes of finitely generated projective modules.

\subsection{A glossary of torsion pairs and t-structures}\label{glossary}

This paper is focused on the interaction of t-structures in the derived category of a commutative noetherian ring with torsion pairs in the hearts of such t-structures. For the definitions of these three concepts (t-structure; heart; torsion pair) and basic properties (the heart is abelian; a t-structure induces a cohomological functor to the heart)  we refer to \cite{BBDG}. In this subsection we setup a brief list of qualifiers (tagged with some initials in square brackets) that we apply to either t-structures, their hearts or torsion pairs in their hearts throughout the paper. 

\subsubsection{t-structures and cosilting}
Let $\mathcal{D}$ be a compactly generated triangulated category with $\mathcal{D}^c$ its full subcategory of compact objects. 
\begin{itemize}
\item A t-structure $\mathbb{T}:=(\mathcal{U},\mathcal{V})$ in $\mathcal{D}$ is \textbf{compactly generated} if $(\mathcal{U}\cap \mathcal{D}^c)^\perp=\mathcal{V}$.
\item A t-structure $\mathbb{T}:=(\mathcal{U},\mathcal{V})$ in $\mathcal{D}$ is \textbf{nondegenerate} if $\cap_{n\in\mathbb{Z}}\mathcal{U}[n]=0=\cap_{n\in\mathbb{Z}}\mathcal{V}[n]$.
\item If $\mathcal{D}=\mathsf{D}(R)$ for a ring $R$ or if $\mathcal{D}=\mathsf{D}^b(R)$ for a coherent ring $R$, we say that $\mathbb{T}$ is \textbf{intermediate} if there is $a<b$ such that $\mathsf{D}^{\leq a}\subseteq \mathcal{U}\subseteq \mathsf{D}^{\leq b}$, where $\mathbb{D}:=(\mathsf{D}^{< 0},\mathsf{D}^{\geq 0})$ is the standard t-structure in $\mathsf{D}(R)$ (respectively, in $\mathsf{D}^b(R)$).
\end{itemize}

Two particular sources of t-structures are cosilting/cotilting objects. 
\begin{itemize}
\item An object $C$ of $\mathcal{D}$ is said to be \textbf{cosilting} if $\mathbb{T}_C:=({}^{\perp_{\leq 0}}C, {}^{\perp_{>0}}C)$ is a t-structure. Such a t-structure is then said to be a \textbf{cosilting t-structure}. The assignment sending a cosilting object $C$ to its t-structure $\mathbb{T}_C$ is denoted by $\theta$.
\item An object $C$ of $\mathcal{D}$ is said to be \textbf{cotilting} if it is cosilting and $\mathsf{Prod}(C)$ is contained in the heart of $\theta(C)$. Such a t-structure is then said to be a \textbf{cotilting t-structure}.
\end{itemize}

Two cosilting objects $C_1$ and $C_2$ are said to be \textbf{equivalent} if $\mathsf{Prod}(C_1)=\mathsf{Prod}(C_2)$ or, equivalently, if they yield the same t-structure (\cite{PV, NSZ}). This means that the assignment $\theta$ described above can be defined on equivalence classes of cosilting objects.

Of particular interest to us are pure-injective cosilting objects. In fact, it remains an open question whether every cosilting is pure-injective. If $\mathcal{D}=\mathsf{D}(R)$, it is shown in \cite[Proposition 3.10]{MV} (based on \cite{B} and \cite{S}) that any cosilting object isomorphic to a bounded complex of injective $R$-modules (such cosilting complexes will be called \textbf{bounded}) is automatically pure-injective. These will, in fact, be the main cosilting objects we will be interested in and so, the pure-injectivity condition can be safely assumed and, for simplicity, we will avoid (unless strictly necessary) referring to it. 

Let us fix some notation regarding these definitions, where $\mathcal{D}$ denotes a compactly generated triangulated category and $R$ denotes a ring. 
\begin{itemize}
\item We denote by $\mathsf{Cosilt}^*(\mathcal{D})$ (respectively, $\mathsf{Cotilt}^*(\mathcal{D})$) the set of equivalence classes of pure-injective cosilting (respectively, cotilting) objects. It is not obvious that these are indeed sets: this follows from a bijection between equivalence classes of cosilting objects and certain ideals of morphisms of $\mathcal{D}^c$ (\cite[Theorem 8.16 and Proposition 9.6]{SS20}).
\item  We denote the class of t-structures in $\mathcal{D}$ by $\mathsf{t\mbox{-}str}_\mathcal{D}$. We consider the following subsets of this class: 
\begin{itemize}
\item $\mathsf{t\mbox{-}str}_\mathcal{D}[\mathsf{CG}]$, whose elements are the compactly generated t-structures in $\mathcal{D}$
\item $\mathsf{t\mbox{-}str}_\mathcal{D}[\mathsf{Cosilt^*}]$, whose elements are the t-structures $\theta(C)$ for $C$ in $\mathsf{Cosilt}^*(\mathcal{D})$. The restriction of $\theta$ to $\mathsf{Cosilt}^*(\mathcal{D})$ induces, tautologically, a bijection 
\begin{equation}\nonumber
\theta\colon \mathsf{Cosilt}^*(\mathcal{D})\longrightarrow \mathsf{t\mbox{-}str}_\mathcal{D}[\mathsf{Cosilt^*}].
\end{equation}
\end{itemize}
\item If $\mathcal{D}=\mathsf{D}(R)$, we consider the following subclasses of the classes defined above:
\begin{itemize}
\item $\mathsf{Cosilt}^b(\mathsf{D}(R))$: the subset of bounded cosilting objects;
\item $\mathsf{Cotilt}^b(\mathsf{D}(R))$: the subset of bounded cotilting objects;
\item $\mathsf{t\mbox{-}str}_{\mathsf{D}(R)}[\mathsf{Int}]$: the subclass of intermediate t-structures;
\item $\mathsf{t\mbox{-}str}_{\mathsf{D}(R)}[\mathsf{CG,Int}]:=\mathsf{t\mbox{-}str}_{\mathsf{D}(R)}[\mathsf{Int}]\cap \mathsf{t\mbox{-}str}_{\mathsf{D}(R)}[\mathsf{CG}]$;
\item $\mathsf{t\mbox{-}str}_{\mathsf{D}(R)}[\mathsf{Cosilt^*,Int}]:=\mathsf{t\mbox{-}str}_{\mathsf{D}(R)}[\mathsf{Int}]\cap \mathsf{t\mbox{-}str}_{\mathsf{D}(R)}[\mathsf{Cosilt^*}]$.
\end{itemize}
\end{itemize}

The following theorem summarises some useful properties of compactly generated t-structures, cositling t-structures and their relation.
\begin{theorem}[\cite{AMV3,L,MV,SS20}]\label{t-structures prelim}
Let $\mathcal{D}$ be a compactly generated triangulated category and let $\mathbb{T}=(\mathcal{U},\mathcal{V})$ be a t-structure in $\mathcal{D}$. The following statements hold.
\begin{enumerate}
\item The t-structure $\mathbb{T}$ lies in $\mathsf{t\mbox{-}str}_\mathcal{D}[\mathsf{Cosilt^*}]$ if and only if $\mathbb{T}$ is nondegenerate, $\mathcal{V}$ is closed under coproducts and the heart of $\mathbb{T}$ is a Grothendieck category. 
\item If $\mathbb{T}$ is nondegenerate and compactly generated t-structure in $\mathcal{D}$, then there is a (unique, up to equivalence) pure-injective cosilting object $C$ such that $\mathsf{Prod}(C)=\mathcal{V}^{\perp}[-1]\cap \mathcal{V}$. We denote the injective function assigning $\mathbb{T}$ to $C$ by $\alpha$; it has $\theta$ as a left inverse, i.e. $\mathbb{T}=\theta(\alpha(\mathbb{T}))=({}^{\perp_{\leq 0}}\alpha(\mathbb{T}),{}^{\perp_{> 0}}\alpha(\mathbb{T}))$.
\item if $\mathcal{D}=\mathsf{D}(R)$, then the assignment $\alpha$ defined above restricts to an injective function (also denoted by $\alpha$)
\begin{equation}\nonumber
\alpha\colon \mathsf{t\mbox{-}str}_\mathcal{D}[\mathsf{CG, Int}]\longrightarrow \mathsf{Cosilt}^b(\mathcal{D}) (\subseteq \mathsf{Cosilt}^*(\mathcal{D})).
\end{equation}
\end{enumerate}
\end{theorem}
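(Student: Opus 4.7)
The theorem collects four threads from the literature, and my plan is to handle the three parts in order, leveraging in each case the same translation between the derived category and the heart.

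For part (1), the forward direction is largely formal. Coproduct closure of $\mathcal{V}={}^{\perp_{>0}}C$ is immediate from the natural isomorphism $\mathsf{Hom}(\bigoplus X_j, C[i])\cong\prod\mathsf{Hom}(X_j,C[i])$, and nondegeneracy reduces to $\bigcap_n{}^{\perp_{>-n}}C={}^{\perp_\mathbb{Z}}C=0$, which is part of the cosilting definition (it is what forces $\mathbb{T}_C$ to be a t-structure). The Grothendieck-heart assertion is the deeper half: I would follow the AMV3/SS20 path, building an injective cogenerator of the heart $\mathcal{H}$ inside $\mathsf{Prod}(C)\cap\mathcal{H}$ via the cohomological functor $H^0_\mathbb{T}$, and verifying AB5 in $\mathcal{H}$ by a careful analysis of how the truncation triangles interact with coproducts, which is precisely what the coproduct-closure of $\mathcal{V}$ buys us.

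For the converse in (1) and the existence claim in (2), the strategy is to reconstruct a cosilting object from the t-structure data. Given $\mathbb{T}$ with Grothendieck heart $\mathcal{H}$, pick an injective cogenerator $E$ of $\mathcal{H}$ and view it as an object of $\mathcal{D}$ through the heart embedding. I would then argue that $\mathsf{Prod}(E)=\mathcal{V}^{\perp}[-1]\cap\mathcal{V}$ and that ${}^{\perp_{\leq 0}}E=\mathcal{U}$, ${}^{\perp_{>0}}E=\mathcal{V}$; both statements encode injective cogeneration in $\mathcal{H}$ together with the vanishing of higher Exts in the heart, and are proved by devissage along the cohomological filtration with respect to $\mathbb{T}$. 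Pure-injectivity of $E$ regarded as an object of $\mathcal{D}$ would follow from Herzog's identification of pure-injectivity with injectivity in the locally coherent category $\mathsf{Mod}(\mathcal{D}^c)$, once one knows that the heart is Grothendieck and that its injectives have injective restricted Yoneda image. For the extra input of (2), a nondegenerate compactly generated t-structure automatically has Grothendieck heart: a generating set arises from cohomologies of compact objects, and AB5 comes from the compatibility of compact generation with coproducts.

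Part (3) adds the intermediacy constraint. If $\mathsf{D}^{\leq a}\subseteq\mathcal{U}\subseteq\mathsf{D}^{\leq b}$, then the heart sits in a bounded strip relative to the standard t-structure, so the cogenerator $E$ built above can be replaced, within its quasi-isomorphism class, by a bounded complex of injective $R$-modules via a truncated injective resolution, which places it inside $\mathsf{Cosilt}^b(\mathsf{D}(R))$. The main obstacle throughout the argument is the delicate passage between the abstract categorical data of the t-structure and the explicit object $C$: showing that $\alpha$ is well-defined, injective, and a section of $\theta$ requires careful control over how injective cogenerators of hearts lift to cosilting objects of $\mathcal{D}$, and this lifting, together with the Grothendieck-heart half of (1), is where I would expect the bulk of the technical work to go.
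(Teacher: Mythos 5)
The paper does not actually prove this theorem; it is a survey statement, and the ``proof'' in the paper is a bibliography: statement (1) is cited to \cite[Corollary 3.8]{AMV3} (with the remark that it goes via Brown representability), statement (2) to the chain \cite{AMV3}, \cite{L}, \cite[Theorem 8.31]{SS20}, and statement (3) to \cite[Theorem 3.14, Proposition 3.10]{MV}. So any blind attempt at a genuine argument will necessarily differ from the text. Your overall route (reduce everything to the cosilting object associated to the t-structure, appeal to pure-injectivity via $\mathsf{Mod}(\mathcal{D}^c)$, and handle intermediacy by injective resolutions) is indeed the route these references follow. However, there is a genuine conceptual error in the crucial reconstruction step, and you also underestimate the depth of two other steps.

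The error is in how you propose to build the cosilting object. You take an injective cogenerator $E$ of the Grothendieck heart $\mathcal{H}$, regard it as an object of $\mathcal{D}$ via $\epsilon_\mathbb{T}$, and then try to show $\mathsf{Prod}(E)=\mathcal{V}^\perp[-1]\cap\mathcal{V}$ and $\mathbb{T}=({}^{\perp_{\leq 0}}E,{}^{\perp_{>0}}E)$ by devissage. This cannot work in general. The cosilting object $C$ satisfying $\mathsf{Prod}(C)=\mathcal{V}^\perp[-1]\cap\mathcal{V}$ is related to the injective cogenerator of the heart by $H^0_\mathbb{T}(C)\cong E$, but $C$ itself typically does \emph{not} lie in $\mathcal{H}$: one has $C\in\mathcal{V}$, but $\mathsf{Hom}_\mathcal{D}(C,C[i])$ can be nonzero for $i<0$. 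In the paper's terminology, $C\in\mathcal{H}$ (equivalently $\mathsf{Prod}(C)\subseteq\mathcal{H}$) is precisely the \emph{cotilting} condition, which is strictly stronger than cosilting and corresponds (by \cite[Proposition 5.1]{PV}, reproduced in Section~5) to $\mathbb{T}$ inducing a derived equivalence. Since the statement you are proving covers all pure-injective cosilting t-structures, and not only cotilting ones, the object $\epsilon_\mathbb{T}(E)$ is simply the wrong candidate. Relatedly, your devissage argument presumes that vanishing of $\mathsf{Ext}^n_\mathcal{H}(X,E)$ for $n\geq 1$ forces $\mathsf{Hom}_\mathcal{D}(X,E[n])=0$ for $n\geq 1$; this fails for $n\geq 2$, as the comparison map $\mathsf{Ext}^n_\mathcal{H}(X,Y)\to\mathsf{Hom}_\mathcal{D}(X,Y[n])$ need not be an isomorphism (its failure to be one is again the derived-equivalence question of Section~5). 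The correct construction, as in \cite[Corollary 3.8]{AMV3} and \cite{SS20}, applies Brown Representability to the cohomological, coproduct-to-product functor $X\mapsto\mathsf{Hom}_\mathcal{H}(H^0_\mathbb{T}(X),E)$; the representing object is the cosilting $C$, and one then verifies $H^0_\mathbb{T}(C)\cong E$, $\mathbb{T}=\theta(C)$ and $\mathsf{Prod}(C)=\mathcal{V}^\perp[-1]\cap\mathcal{V}$ a posteriori. (Pure-injectivity of $C$ also drops out of this construction, via the compatibility of the corepresented functor with coproducts, rather than from an assumption that injectives of the heart have injective Yoneda image.)

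Two further points worth flagging. First, for part (2), you dismiss the Grothendieck property of the heart of a compactly generated t-structure as ``compatibility of compact generation with coproducts.'' Coproduct-closure of $\mathcal{V}$ is indeed immediate (compact objects in $\mathcal{U}$ commute with coproducts), and a generating set for the heart does come from cohomologies of compacts; but exactness of direct limits (AB5) is a genuine theorem here, proved in \cite{L} and \cite{SS20} by quite delicate means, so this step is not formal. Second, for part (3), after the construction of $C$ is corrected as above, your idea of producing a bounded complex of injectives from the intermediacy bounds is essentially the right one and agrees with \cite[Theorem 3.14]{MV}; just note it is $C$, not $\epsilon_\mathbb{T}(E)$, that needs to be so represented.
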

\begin{proof}
Statement (1) is shown in \cite[Corollary 3.8]{AMV3}, using the Brown Representability Theorem for compactly generated triangulated categories. Statement (2) was first proved in \cite{AMV3} when $\mathcal{D}$ is, in addition, algebraic; 
it was later extended in \cite{L} for when $\mathcal{D}$ lies at the base of stable derivator, and it was finally proved for all compactly generated triangulated categories in \cite[Thereom 8.31]{SS20}. We also refer to \cite{PV} and \cite{NSZ} for details on how to obtain the cosilting object out of a cosilting t-structure. Finally step (3) is an easy observation that follows, for example, from \cite[Theorem 3.14]{MV}. The fact that $\mathsf{Cosilt}^b(\mathcal{D}) \subseteq \mathsf{Cosilt}^*(\mathcal{D})$ was already mentioned - see \cite[Proposition 3.10]{MV}.
\end{proof}

\subsubsection{Hearts}
We denote by $\mathcal{H}$ the assignment that sends a t-structure $\mathbb{T}=(\mathcal{U},\mathcal{V})$ in a triangulated category $\mathcal{D}$ to its heart, which we write as $\mathcal{H}_\mathbb{T}$ and it denotes the subcategory $\mathcal{U}[-1]\cap\mathcal{V}$ of $\mathcal{D}$, with inclusion functor denoted by $\epsilon_\mathbb{T}\colon \mathcal{H}_\mathbb{T}\longrightarrow \mathcal{D}$. We denote the class of hearts in $\mathcal{D}$ by $\mathsf{Heart}_\mathcal{D}$. 
By definition, the assignment $\mathcal{H}\colon \mathsf{t\mbox{-}str}_{\mathcal{D}}\longrightarrow \mathsf{Heart}_\mathcal{D}$ is surjective, but it is well known that a heart $\mathcal{H}_\mathbb{T}$ does not uniquely determine $\mathbb{T}$. For example, any stable t-structure (i.e. a t-structure for which both classes are triangulated subcategories) has a zero heart, and yet there are multiple such t-structures in any non-trivial $\mathcal {D}$ (for example, the pairs $(0,\mathcal{D})$ and $(\mathcal{D},0)$). In other words, the assignment $\mathcal{H}\colon \mathsf{t\mbox{-}str}_\mathcal{D} \longrightarrow \mathsf{Heart}_\mathcal{D}$ is surjective by definition, but usually highly non injective. Still, we have the following easy lemma (analogous to the reconstruction of a bounded t-structure from its heart, see for example \cite[Lemma 2.3]{Br}).

\begin{lemma}\label{int t-str}
Let $R$ be a ring and $\mathbb{T}$ an intermediate t-structure in $\mathsf{D}(R)$. Then the subcategory $\mathcal{H}_\mathbb{T}$ of $\mathcal{D}$ determines the intermediate t-structure $\mathbb{T}$.
\end{lemma}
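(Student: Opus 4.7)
The plan is to reconstruct the aisle of $\mathbb{T}$ explicitly from $\mathcal{H}_\mathbb{T}$, in the spirit of the bounded case (cf.\ \cite[Lemma 2.3]{Br}), using the intermediate hypothesis to guarantee that the reconstruction terminates in finitely many steps. Concretely, given two intermediate $t$-structures $\mathbb{T}=(\mathcal{U},\mathcal{V})$ and $\mathbb{T}'=(\mathcal{U}',\mathcal{V}')$ in $\mathsf{D}(R)$ with $\mathcal{H}_\mathbb{T}=\mathcal{H}_{\mathbb{T}'}=:\mathcal{H}$, I would pick $a<b$ for which $\mathsf{D}^{\leq a}\subseteq\mathcal{U},\mathcal{U}'\subseteq\mathsf{D}^{\leq b}$ simultaneously, and then show that a single explicit formula recovers both $\mathcal{U}$ and $\mathcal{U}'$.

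I would begin by collecting two elementary observations. First, from the identity $\mathcal{H}=\mathcal{U}[-1]\cap\mathcal{V}$ one has $\mathcal{H}[1]\subseteq\mathcal{U}$, so closure of the aisle under suspension yields $\mathcal{H}[n]\subseteq\mathcal{U}$ for every $n\geq 1$ (and symmetrically for $\mathcal{U}'$). Second, $\mathsf{D}^{\leq a}\subseteq\mathcal{U}$ forces $\mathcal{V}=\mathcal{U}^{\perp}\subseteq(\mathsf{D}^{\leq a})^{\perp}=\mathsf{D}^{\geq a+1}$, which will control the lower end of $\mathbb{T}$-cohomology. The central claim is the reconstruction formula
\begin{equation*}
\mathcal{U} \;=\; \mathsf{D}^{\leq a}\ast\mathcal{H}[b-a]\ast\mathcal{H}[b-a-1]\ast\cdots\ast\mathcal{H}[1],
\end{equation*}
and analogously for $\mathcal{U}'$. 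The inclusion $\supseteq$ is immediate: each factor lies in $\mathcal{U}$ by the above, and $\mathcal{U}$ is closed under extensions. For $\subseteq$, I would take $X\in\mathcal{U}$ and split off the standard tail via the triangle $\tau^{\leq a}_{\mathrm{std}}X\to X\to \tau^{>a}_{\mathrm{std}}X\to\tau^{\leq a}_{\mathrm{std}}X[1]$; since $\tau^{\leq a}_{\mathrm{std}}X\in\mathsf{D}^{\leq a}\subseteq\mathcal{U}$, rotation places $Y:=\tau^{>a}_{\mathrm{std}}X$ in $\mathcal{U}\cap\mathsf{D}^{(a,b]}$. The crucial step is to decompose such a $Y$ by iterated $\mathbb{T}$-truncation: vanishing of the ``top'' $\mathbb{T}$-cohomology comes from $Y\in\mathcal{U}$, while $Y\in\mathsf{D}^{\leq b}$ combined with $\mathcal{V}\subseteq\mathsf{D}^{\geq a+1}$ forces vanishing at the ``bottom'', so that the non-zero $\mathbb{T}$-cohomology of $Y$ sits in at most $b-a$ consecutive degrees. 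The associated $\mathbb{T}$-truncation triangles then express $Y$ as a finite $\ast$-extension of objects in $\mathcal{H}[1],\ldots,\mathcal{H}[b-a]$, completing the proof of the formula.

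With the formula established the conclusion is immediate: the right-hand side depends only on $\mathcal{H}$, on $a$ and $b$, and on the standard $t$-structure --- data that are common to $\mathbb{T}$ and $\mathbb{T}'$ --- so $\mathcal{U}=\mathcal{U}'$, and the identity $\mathcal{V}=\mathcal{U}^{\perp}$ then yields $\mathbb{T}=\mathbb{T}'$. The main obstacle is exactly the bounded-window argument in the reconstruction step: one must verify precisely that for $Y\in\mathcal{U}\cap\mathsf{D}^{(a,b]}$ the $\mathbb{T}$-cohomology is concentrated in at most $b-a$ consecutive $\mathbb{T}$-degrees, and that the iterated $\mathbb{T}$-truncation does converge to $Y$ in finitely many steps. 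This is where the intermediate hypothesis is indispensable --- without it, the reconstruction would in principle be transfinite, and the heart alone would no longer determine the $t$-structure.
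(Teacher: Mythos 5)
Your argument is essentially the paper's: split off the standard truncation at the lower bound of the window, then express the remaining piece $Y\in\mathcal{U}\cap\mathsf{D}^{(a,b]}$ as a finite iterated extension of $\mathcal{H}[1],\dots,\mathcal{H}[b-a]$ via $\mathbb{T}$-truncation, which terminates because of the intermediate hypothesis. One small slip in the ``bottom vanishing'' step: the relevant pair of facts is $Y\in\mathsf{D}^{\geq a+1}$ together with $\mathcal{U}\subseteq\mathsf{D}^{\leq b}$, giving $\mathcal{U}[b-a]\subseteq\mathsf{D}^{\leq a}$ and hence no nonzero maps from $\mathcal{U}[b-a]$ into $Y$, so $Y\in\mathcal{V}[b-a]$; the pair you cite ($Y\in\mathsf{D}^{\leq b}$ and $\mathcal{V}\subseteq\mathsf{D}^{\geq a+1}$) only yields $Y\in\mathcal{U}[-(b-a)]$, which is a bound from the wrong side, though the slip is harmless since all four inclusions are available.
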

\begin{proof}
Let $\mathbb{T}=(\mathcal{U},\mathcal{V})$. Without loss of generality, suppose that $\mathsf{D}^{\leq -n}\subseteq \mathcal{U}\subseteq \mathsf{D}^{\leq 0}$ for some $n\geq 0$. Let $X$ be an object in $\mathsf{D}^{\geq -n+1}\cap \mathcal{U}$ (and, therefore, in $\mathcal{V}[n]\cap \mathcal{U}$). By iterated truncations (with respect to $\mathbb{T}$), we obtain that $X$ is an iterated finite extension of shifts of objects in $\mathcal{H}_\mathbb{T}$. Indeed, notice that this iteration stops at the truncation with respect to $\mathcal{U}[n]\subseteq \mathsf{D}^{\leq -n}$, since objects in this subcategory have no maps to $X$. It then follows that every object in $\mathcal{U}$ is an extension of an object in $\mathsf{D}^{\leq -n}$ with an object built from a finite sequence of extensions of objects in shifts of the heart $\mathcal{H}_\mathbb{T}$. Finally, note that $\mathbb{T}$ is completely determined by $\mathcal{U}$.
\end{proof}

We denote by $\mathsf{Heart}_{\mathsf{D}(R)}[\mathsf{CG, Int}]$ the image of $\mathsf{t\mbox{-}str}_{\mathsf{D}(R)}[\mathsf{CG, Int}]$ under $\mathcal{H}$.

\subsubsection{Torsion pairs} A torsion pair $\mathbf{t}:=(\mathcal{T},\mathcal{F})$ in an abelian category $\mathcal{A}$ is said to be 
\begin{itemize}
\item \textbf{hereditary} if $\mathcal{T}$ is closed under subobjects;
\item \textbf{of finite type} if $\mathcal{A}$ is cocomplete and $\mathcal{F}$ is closed under direct limits. 
\end{itemize}
We denote the class of torsion pairs in an abelian category $\mathcal{A}$ by $\mathsf{Tors}_\mathcal{A}$ and we write
\begin{itemize}
\item $\mathsf{Tors}_\mathcal{A}[\mathsf{Her}]$ for the subclass of hereditary torsion pairs;
\item $\mathsf{Tors}_\mathcal{A}[\mathsf{FT}]$ for the subclass of torsion pairs of finite type (if $\mathcal{A}$ is cocomplete);
\item $\mathsf{Tors}_\mathcal{A}[\mathsf{Her, FT}]:=\mathsf{Tors}_\mathcal{A}[\mathsf{Her}]\cap \mathsf{Tors}_\mathcal{A}[\mathsf{FT}]$  (if $\mathcal{A}$ is cocomplete).
\end{itemize} 

If $\mathcal{A}$ is a locally coherent Grothendieck category, we may wonder whether the classes $\mathsf{Tors}_\mathcal{A}$ and $\mathsf{Tors}_{\mathsf{fp}(\mathcal{A})}$ are related in any sensible way. 

\begin{theorem}\cite[Lemma 4.4]{CBlfp}\label{torsion pairs prelim}
Let $\mathcal{A}$ denote a locally finitely presented Grothendieck category. There is an injective assignment
\begin{align*}
\mathsf{Lift}\colon \mathsf{Tors}_{\mathsf{fp}(\mathcal{A})}\longrightarrow& \mathsf{Tors}_{\mathcal{A}}[FT]\\ (\mathsf{t},\mathsf{f})\mapsto&({}^\perp(\mathsf{t}^\perp),(\mathsf{t}^\perp))=(\varinjlim \mathsf{t},\varinjlim \mathsf{f})
\end{align*}
where $\varinjlim \mathsf{t}$ and $\varinjlim \mathsf{f}$ denote the closure under direct limits in $\mathcal{A}$ of the subclasses $\mathsf{t}$ and $\mathsf{f}$ of $\mathsf{fp}(\mathcal{A})$, respectively. Furthermore, if $\mathcal{A}$ is a locally noetherian Grothendieck category (i.e. if all objects in $\mathsf{fp}(\mathcal{A})$ are noetherian), then $\mathsf{Lift}$ is a bijection. 
\end{theorem}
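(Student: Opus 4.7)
The plan is to analyze $\mathsf{Lift}$ in three steps: establish well-definedness, then verify injectivity via the candidate inverse $(\mathcal{T},\mathcal{F})\mapsto(\mathcal{T}\cap\mathsf{fp}(\mathcal{A}),\mathcal{F}\cap\mathsf{fp}(\mathcal{A}))$, and finally show surjectivity under the locally noetherian hypothesis.

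For well-definedness, fix $(\mathsf{t},\mathsf{f})\in\mathsf{Tors}_{\mathsf{fp}(\mathcal{A})}$ and set $\mathcal{F}:=\varinjlim\mathsf{f}$ and $\mathcal{T}:={}^\perp\mathcal{F}$, where $\perp$ is computed in $\mathcal{A}$. Since $\mathsf{Hom}(T,-)$ commutes with direct limits for $T\in\mathsf{fp}(\mathcal{A})$, we immediately have $\varinjlim\mathsf{t}\subseteq\mathcal{T}$ and $\mathcal{F}\subseteq\mathsf{t}^\perp$. The crucial identification is $\mathsf{t}^\perp\subseteq\mathcal{F}$: given $X\in\mathsf{t}^\perp$, write $X=\varinjlim_{i\in I} X_i$ with $X_i\in\mathsf{fp}(\mathcal{A})$ and apply the exact direct limit functor to the termwise torsion sequences $0\to t(X_i)\to X_i\to f(X_i)\to 0$ in $\mathsf{fp}(\mathcal{A})$, obtaining a short exact sequence $0\to\varinjlim t(X_i)\to X\to\varinjlim f(X_i)\to 0$ in $\mathcal{A}$. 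As $\mathsf{t}^\perp$ is closed under subobjects in a Grothendieck category, $\varinjlim t(X_i)$ lies in $\mathsf{t}^\perp$; but this object is a direct limit of objects $t(X_i)\in\mathsf{t}$ along canonical maps into $\varinjlim t(X_i)$, each of which vanishes by orthogonality, forcing $\varinjlim t(X_i)=0$. Hence $X\cong\varinjlim f(X_i)\in\mathcal{F}$. The same short exact sequences provide torsion decompositions for arbitrary $X\in\mathcal{A}$, yielding a torsion pair $(\mathcal{T},\mathcal{F})$ of finite type; a direct check (using that any object of $\mathcal{T}$ has zero torsion-free part) identifies $\mathcal{T}=\varinjlim\mathsf{t}$.

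For injectivity, it suffices to show $\mathsf{f}=\mathcal{F}\cap\mathsf{fp}(\mathcal{A})$ (and analogously $\mathsf{t}=\mathcal{T}\cap\mathsf{fp}(\mathcal{A})$). The inclusion $\mathsf{f}\subseteq\mathcal{F}\cap\mathsf{fp}(\mathcal{A})$ is clear. Conversely, if $X\in\mathsf{fp}(\mathcal{A})\cap\varinjlim\mathsf{f}$, then $\mathrm{id}_X$ factors through some $F\in\mathsf{f}$, exhibiting $X$ as a direct summand of $F$; since $\mathsf{f}$ is closed under subobjects in $\mathsf{fp}(\mathcal{A})$, we conclude $X\in\mathsf{f}$.

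For surjectivity in the locally noetherian setting, given a finite-type torsion pair $(\mathcal{T},\mathcal{F})$ in $\mathcal{A}$, set $\mathsf{t}:=\mathcal{T}\cap\mathsf{fp}(\mathcal{A})$ and $\mathsf{f}:=\mathcal{F}\cap\mathsf{fp}(\mathcal{A})$. Noetherianity forces $\mathsf{fp}(\mathcal{A})$ to be closed under subobjects and quotients in $\mathcal{A}$, so the ambient torsion sequences of finitely presented objects remain in $\mathsf{fp}(\mathcal{A})$, making $(\mathsf{t},\mathsf{f})$ a torsion pair there. To see $\mathsf{Lift}(\mathsf{t},\mathsf{f})=(\mathcal{T},\mathcal{F})$, take $X\in\mathcal{F}$ and write $X=\varinjlim X_i$ with $X_i\in\mathsf{fp}(\mathcal{A})$; replacing $X_i$ by its image in $X$ (still finitely presented in the noetherian setting), one has $X_i\hookrightarrow X$, so $X_i\in\mathcal{F}\cap\mathsf{fp}(\mathcal{A})=\mathsf{f}$, yielding $\mathcal{F}\subseteq\varinjlim\mathsf{f}$; the reverse inclusion follows from the finite-type hypothesis. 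The main obstacle is the vanishing argument for $\varinjlim t(X_i)$ in the well-definedness step: although conceptually short, it requires simultaneously exploiting exactness of direct limits in $\mathcal{A}$, closure of orthogonal classes under subobjects, and the colimit principle that a direct limit is zero as soon as all structural maps into it vanish.
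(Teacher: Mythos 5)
Your proof is correct and follows the standard argument for this result; the paper itself does not reproduce a proof but simply cites \cite[Lemma 4.4]{CBlfp}, and what you wrote is essentially the proof one finds there. Two small points worth tightening. First, in the well-definedness step you conclude that $(\mathcal{T},\mathcal{F})$ is a torsion pair \emph{of finite type} without comment; this does follow, but it is worth making explicit that once $\mathsf{t}^\perp=\varinjlim\mathsf{f}$ is known, closure of $\mathcal{F}=\mathsf{t}^\perp$ under direct limits is immediate because $\mathsf{t}\subseteq\mathsf{fp}(\mathcal{A})$ and $\mathsf{Hom}(T,-)$ commutes with filtered colimits. Second, in the surjectivity step you implicitly use that in a locally noetherian Grothendieck category the finitely presented objects coincide with the noetherian ones, so that a quotient of a finitely presented object is again finitely presented; this is a standard fact, but it is the hinge on which the ``replace $X_i$ by its image'' move turns and deserves a sentence.
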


\section{A guide to compactly generated t-structures for a commutative noetherian ring}\label{der cat}

For a commutative noetherian ring $R$, consider its prime spectrum, which 
we denote by $\mathsf{Spec}(R)$. This is a poset ordered by inclusion, and it is endowed with two natural topologies, naturally dual to each other. Indeed, for a prime ideal $\mathfrak{p}$, let $V(\mathfrak{p})$ denote the set of prime ideals of $R$ that contain $\mathfrak{p}$. Then
\begin{itemize}
\item The Zariski topology is the unique topology for which the closed sets are exactly the finite unions of sets of the form $V(\mathfrak{p})$;
\item The Hochster dual topology is the unique topology for which the sets of the form $V(\mathfrak{p})$ are a basis of open subsets. 
\end{itemize}
Note that the open sets of the Hochster dual topology are therefore the upper sets of the poset $\mathsf{Spec}(R)$, i.e. the subsets $V$ of $\mathsf{Spec}(R)$ for which if $\mathfrak{p}$ lies in $V$ and $\mathfrak{p}\subseteq \mathfrak{q}$, then also $\mathfrak{q}$ lies in $V$. We will denote the set of these subsets by $\mathscr{O}^H(\mathsf{Spec}(R))$, and they are called \textbf{specialisation-closed}. These subsets of $\mathsf{Spec}(R)$ play an important role in classifying hereditary torsion pairs in $\mathsf{Mod}(R)$ (see Example \ref{standard}). 

When studying t-structures in $\mathsf{D}(R)$, the following slightly more intricate combinatorial objects are useful.

\begin{lemma}\label{combinatorics}\cite[Proposition 4.1]{Sta}\cite[Proposition 4.3]{Tak}\cite[Remark 2.10]{HNS} Let $R$ be a commutative noetherian ring. There is a bijection between 
\begin{enumerate}
\item Decreasing functions $\varphi\colon \mathbb{Z}\longrightarrow 2^{\mathsf{Spec}(R)}$ such that $\varphi(n)$ lies in $\mathscr{O}^H(\mathsf{Spec}(R))$ for all $n$ in $\mathbb{Z}$ (these are known as \textbf{sp-filtrations} of $\mathsf{Spec}(R)$).
\item Increasing functions $f\colon \mathsf{Spec}(R)\longrightarrow \overline{\mathbb{Z}}:=\mathbb{Z}\cup\{\pm \infty\}$.
\end{enumerate}
Moreover, this bijection restricts to a bijection between sp-filtrations $\varphi$ of $\mathsf{Spec}(R)$ for which there are $a<b$ integers such that $\varphi(a)=\mathsf{Spec}(R)$ and $\varphi(b)=\emptyset$ (we will call them \textbf{bounded}) and \textbf{bounded} increasing functions $f\colon \mathsf{Spec}(R)\longrightarrow \mathbb{Z}$, i.e. those whose image is contained in an interval $[a,b]$ ($a<b$ integers). 
\end{lemma}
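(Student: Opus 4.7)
The plan is to exhibit the claimed bijection by explicit mutually inverse constructions and then verify that the boundedness conditions on each side match.

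First, I would define the forward map by sending an sp-filtration $\varphi\colon \mathbb{Z}\longrightarrow 2^{\mathsf{Spec}(R)}$ to the function $f_\varphi\colon \mathsf{Spec}(R)\longrightarrow \overline{\mathbb{Z}}$ given by $f_\varphi(\mathfrak{p}):=\sup\{n\in\mathbb{Z}\colon \mathfrak{p}\in \varphi(n)\}$, with the convention that the supremum of the empty set equals $-\infty$. In the other direction, I would send an increasing function $f\colon \mathsf{Spec}(R)\longrightarrow \overline{\mathbb{Z}}$ to the filtration $\varphi_f(n):=\{\mathfrak{p}\in \mathsf{Spec}(R)\colon f(\mathfrak{p})\geq n\}$.

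Next I would check well-definedness. For $\varphi \mapsto f_\varphi$: if $\mathfrak{p}\subseteq \mathfrak{q}$ and $\mathfrak{p}\in\varphi(n)$, then the specialisation-closure of $\varphi(n)$ forces $\mathfrak{q}\in\varphi(n)$, so $f_\varphi(\mathfrak{p})\leq f_\varphi(\mathfrak{q})$. For $f\mapsto \varphi_f$: if $\mathfrak{p}\in\varphi_f(n)$ and $\mathfrak{p}\subseteq \mathfrak{q}$, then $f(\mathfrak{q})\geq f(\mathfrak{p})\geq n$, so $\mathfrak{q}\in \varphi_f(n)$, and $\varphi_f$ is visibly decreasing in $n$. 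To check that the compositions are identities I would use the key fact that a nonempty subset of $\mathbb{Z}$ bounded above attains its supremum: this gives $f_{\varphi_f}=f$ directly, and for the other direction one shows $\mathfrak{p}\in \varphi_{f_\varphi}(n)$ iff $f_\varphi(\mathfrak{p})\geq n$ iff there exists $m\geq n$ with $\mathfrak{p}\in\varphi(m)$, which by monotonicity of $\varphi$ is equivalent to $\mathfrak{p}\in\varphi(n)$.

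Finally, for the bounded case: if $\varphi$ is bounded with $\varphi(a)=\mathsf{Spec}(R)$ and $\varphi(b)=\emptyset$ for $a<b$, then every $\mathfrak{p}$ satisfies $\mathfrak{p}\in\varphi(a)$ (so $f_\varphi(\mathfrak{p})\geq a$) and $\mathfrak{p}\notin\varphi(b)$ (so $f_\varphi(\mathfrak{p})<b$), placing the image of $f_\varphi$ inside $[a,b-1]$. Conversely, if $f$ has image in an integer interval $[a,b]$, then $\varphi_f(a)=\mathsf{Spec}(R)$ and $\varphi_f(b+1)=\emptyset$, so the associated filtration is bounded.

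There is no genuinely hard step here: the lemma is a tautological unfolding of the definitions, and the only care required is with endpoint conventions and the use of $\pm\infty$ to accommodate primes that lie in every $\varphi(n)$ or in none of them. The specialisation-closed condition is used exactly once, to guarantee that the function $f_\varphi$ is increasing along the specialisation order.
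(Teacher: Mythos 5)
Your proof is correct and follows essentially the same route as the paper: an explicit pair of mutually inverse maps built from $\sup$ and preimages, with the specialisation-closed condition used precisely to make $f_\varphi$ order-preserving. The only difference is a harmless shift-by-one convention: where the paper sets $\mathsf{f}_\varphi(\mathfrak{p})=n$ for $\mathfrak{p}\in\varphi(n-1)\setminus\varphi(n)$ and $\Phi_f(n)=f^{-1}(]n,+\infty[)$, you set $f_\varphi(\mathfrak{p})=\sup\{n:\mathfrak{p}\in\varphi(n)\}$ (one less) and $\varphi_f(n)=\{\mathfrak{p}:f(\mathfrak{p})\geq n\}$; both pairs are mutually inverse, and the one you chose has the mild advantage of being stated uniformly over the unbounded case with $\pm\infty$ rather than first restricting to bounded filtrations as the paper's in-text description does.
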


The set formed by the bounded sp-filtrations will be denoted by $\mathsf{sp\mbox{-}filt}^b(R)$; the bounded increasing functions mentioned above are bounded poset homomorphisms from $\mathsf{Spec}(R)$ to $\mathbb{Z}$, and we denote the set of those by $\mathsf{Hom}^b_\mathsf{pos}(\mathsf{Spec}(R),\mathbb{Z})$. Below, we define the inverse assignments giving the bijection set out in the theorem:

\begin{equation}\nonumber
\begin{split}
\mathsf{f}\colon \mathsf{sp\mbox{-}filt}^b(R)\longrightarrow \mathsf{Hom}^b_\mathsf{pos}(\mathsf{Spec}(R),\mathbb{Z}), &\ \ \  \varphi\mapsto \mathsf{f}_\varphi\\
\Phi\colon  \mathsf{Hom}^b_\mathsf{pos}(\mathsf{Spec}(R),\mathbb{Z})\longrightarrow \mathsf{sp\mbox{-}filt}^b(R), &\ \ \  f\mapsto\Phi_f.
\end{split}
\end{equation}

Given a bounded sp-filtration $\varphi$, note that $\mathsf{Spec}(R)$ is the $\mathbb{Z}$-indexed disjoint union of the subsets $\varphi(n-1)\setminus \varphi(n)$. Define a function $\mathsf{f}_\varphi\colon \mathsf{Spec}(R)\longrightarrow \mathbb{Z}$ by assigning to each prime ideal $\mathfrak{p}$ the (unique) integer $\mathsf{f}_\varphi(\mathfrak{p}):=n$ for which $\mathfrak{p}$ lies in $\varphi(n-1)\setminus \varphi(n)$. It can then be checked that $\mathsf{f}_\varphi$ is an increasing function (essentially due to the fact that $\phi(n)$ is specialisation-closed for all $n$ in $\mathbb{Z}$). 
Note that if $\varphi(a)=\mathsf{Spec}(R)$ and $\varphi(b)=\emptyset$, then the image of $\mathsf{f}_\varphi$ is contained in the integer interval $[a,b[$. 
For the inverse assignment, given an increasing function $f\colon \mathsf{Spec}(R)\longrightarrow \mathbb{Z}$, define a filtration $\Phi_f$ of $\mathsf{Spec}(R)$ as follows: $\Phi_f(n):=f^{-1}(]n,+\infty[)$. It can then be easily checked that this is an sp-filtration and, moreover, that since $f$ is assumed to have a bounded image, say contained in $]a,b]$ for some integers $a<b$, then $\Phi_f(a)=f^{-1}(]a,+\infty[)=\mathsf{Spec}(R)$ and $\Phi_f=f^{-1}(]b,+\infty[)=\emptyset$. This shows that $\Phi_f$ is, indeed, a bounded sp-filtration.

\begin{example}
Note that if $R$ has finite Krull dimension $n$, the \textbf{height function} sending each prime in $\mathsf{Spec}(R)$ to its height in a bounded poset homomophism, whose image lies precisely in the interval $[0,n]$.
\end{example}

The above combinatorial data turns out to classify compactly generated t-structures in the derived category of a commutative noetherian ring $R$. Denoting by $k(\mathfrak{p})$ the residue field of $R$ at $\mathfrak{p}$, we define the \textbf{support} of an object $X$ of $\mathsf{D}(R)$ as
$$\mathsf{supp}(X):=\{\mathfrak{p}\in\mathsf{Spec}(R)\colon k(\mathfrak{p})\otimes_R^\mathbb{L}X\neq 0\}$$
For any subcategory $\mathcal{X}$ of $\mathsf{D}(R)$, we write $\mathsf{supp}(\mathcal{X})$ to denote the union, over all $X$ in $\mathcal{X}$ of $\mathsf{supp}(X)$. The following theorem combines a series of results from the literature.

\begin{theorem}\label{diagram}
Let $R$ be a commutative noetherian ring. The diagram
\begin{equation}\nonumber
\xymatrix{\mathsf{t\mbox{-}str}_{\mathsf{D}^b(R)}[\mathsf{Int}]\ar[rrrr]^{\alpha\ \circ \ \mathsf{t\mbox{-}Lift}}\ar[dd]^{\mathsf{t\mbox{-}Lift}}&&&&\mathsf{Cosilt}^b(\mathsf{D}(R))\ar@/^1.3pc/[ddllll]^{\theta}\ar[dd]_{\mathcal{H}\circ \theta}\\ \\ \mathsf{t\mbox{-}str}_{\mathsf{D}(R)}[\mathsf{CG, Int}]\ar@/^1.3pc/[uurrrr]^{\alpha}\ar[rrrr]_{\mathcal{H}}\ar[d]^{\beta}&&&& \mathsf{Heart}_{\mathsf{D}(R)}[\mathsf{CG,Int}]\ar[d]_{\gamma}\\ \mathsf{sp\mbox{-}filt}^b(R)\ar[rrrr]^{\mathsf{f}}&&&&\mathsf{Hom}^b_\mathsf{pos}(\mathsf{Spec}(R),\mathbb{Z})}
\end{equation}
commutes, $\mathsf{t\mbox{-}Lift}$ is an injection and  $\alpha$, $\beta$, $\gamma$, $\theta$, $\mathcal{H}$ and $\mathsf{f}$ are bijections defined by:
 \begin{itemize} 
  \item $\alpha$, $\theta$, $\mathcal{H}$ and $\mathsf{f}$ are the assignments already defined: 
  \begin{itemize}
  \item $\alpha$ sends a compactly generated intermediate t-structure $\mathbb{T}$ to the (unique up to equivalence) bounded cosilting complex $\alpha(\mathbb{T})$ such that $\mathbb{T}=({}^{\perp_{\leq 0}}\alpha(\mathbb{T}),{}^{\perp_{>0}}\alpha(\mathbb{T}))$; 
  \item $\theta$ sends a cosilting complex $C$ to the associated cosilting t-structure, and it is the inverse of $\alpha$;
  \item $\mathcal{H}$ sends a t-structure $\mathbb{T}$ to its heart $\mathcal{H}_\mathbb{T}$;
  \item $\mathsf{f}$ sends a bounded sp-filtration $\varphi$ to a bounded morphism of posets $\mathsf{f}_\varphi$;
  \end{itemize}
 \item $\mathsf{t\mbox{-}Lift}(u,v):=({}^\perp(u^{\perp}),u^{\perp})=(\mathsf{hocolim}(u),\mathsf{hocolim}(v))$;
 \item $\beta(\mathcal{U},\mathcal{V}):=(\varphi\colon \mathbb{Z}\longrightarrow 2^{\mathsf{Spec}(\mathbb{R})}; n\mapsto \mathsf{supp}(H^n(\mathcal{U})))$;
 \item $\gamma(\mathcal{H}_\mathbb{T}):=(\psi\colon \mathsf{Spec}(R)\longrightarrow \mathbb{Z}; \mathfrak{p}\mapsto \psi(\mathfrak{p}))$, where $\psi(\mathfrak{p})$  is the unique integer for which $k(\mathfrak{p})[-\psi(\mathfrak{p})]$ lies in $\mathcal{H}_\mathbb{T}$.
 \end{itemize}
\end{theorem}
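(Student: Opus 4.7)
The plan is to build each bijection of the diagram in turn and then check commutativity. The bijection $\mathsf{f}$ is already Lemma \ref{combinatorics}. The bijection $\beta$ is essentially the Alonso-Jerem\'ias-Saor\'in classification \cite{AJS}: compactly generated t-structures in $\mathsf{D}(R)$ correspond to sp-filtrations, the direction sending $(\mathcal{U},\mathcal{V})$ to $n\mapsto\mathsf{supp}(H^n(\mathcal{U}))$ being the explicit one, with inverse sending $\varphi$ to the aisle $\{X\in\mathsf{D}(R)\colon \mathsf{supp}(H^n(X))\subseteq\varphi(n)\text{ for all }n\}$. The intermediate condition $\mathsf{D}^{\leq a}\subseteq\mathcal{U}\subseteq\mathsf{D}^{\leq b}$ translates exactly to $\varphi(a)=\mathsf{Spec}(R)$ and $\varphi(b)=\emptyset$, so the bijection restricts correctly to $\mathsf{t\mbox{-}str}_{\mathsf{D}(R)}[\mathsf{CG, Int}]\leftrightarrow \mathsf{sp\mbox{-}filt}^b(R)$.

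For $\alpha$ and $\theta$: Theorem \ref{t-structures prelim}(3) already provides $\alpha$ as an injection with values in $\mathsf{Cosilt}^b(\mathsf{D}(R))$ and $\theta$ as a pre-inverse. To close the loop I would verify that $\theta(C)$ is compactly generated and intermediate for every bounded cosilting complex $C$: intermediacy is clear since $C$ is a bounded complex of injectives, while compact generation in this commutative noetherian setting is the content of \cite{HN}. With this, $\alpha$ and $\theta$ are mutually inverse bijections.

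The assignment $\mathcal{H}$ restricted to compactly generated intermediate t-structures is injective by Lemma \ref{int t-str}, and surjective onto its image by definition. The map $\mathsf{t\mbox{-}Lift}$ is defined by $(u,v)\mapsto ({}^\perp(u^\perp),u^\perp)$; the plan is to identify this with the compactly generated intermediate t-structure attached to the \emph{same} bounded sp-filtration that encodes $(u,v)$ in $\mathsf{D}^b(R)$, so that compact generation is built in and the intermediate bounds pass through. Injectivity of $\mathsf{t\mbox{-}Lift}$ is then immediate by restricting back to $\mathsf{D}^b(R)$.

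Finally, the map $\gamma$ is dictated by commutativity. Using the aisle description coming from $\beta$, a residue-field complex $k(\mathfrak{p})[-n]$ lies in the heart associated to $\varphi$ precisely when $\mathfrak{p}\in\varphi(n-1)\setminus\varphi(n)$, i.e. when $\mathsf{f}_\varphi(\mathfrak{p})=n$; this simultaneously yields existence and uniqueness of $\psi(\mathfrak{p})$, the commutativity of the lower square $\gamma\circ\mathcal{H}=\mathsf{f}\circ\beta$, and the bijectivity of $\gamma$. The main obstacle is the compact-generation claim for bounded cosilting t-structures from \cite{HN}, together with the support-theoretic description of aisles underlying \cite{AJS}; once these are in hand, the remainder of the diagram reduces to routine naturality checks.
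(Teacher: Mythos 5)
Your proposal follows essentially the same architecture as the paper's proof: $\beta$ from \cite{AJS}, $\mathsf{f}$ from Lemma~\ref{combinatorics}, $\mathcal{H}$ from Lemma~\ref{int t-str}, $\alpha$/$\theta$ from Theorem~\ref{t-structures prelim}(3) plus the telescope-type input of \cite{HN}, and $\gamma$ dictated by the commutativity of the bottom square via the residue-field shift computation. All of that is sound and matches the paper.

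There is, however, a genuine gap in your treatment of $\mathsf{t\mbox{-}Lift}$. You propose to prove well-definedness by identifying $({}^\perp(u^\perp),u^\perp)$ with ``the compactly generated intermediate t-structure attached to the same bounded sp-filtration that encodes $(u,v)$ in $\mathsf{D}^b(R)$.'' This presupposes that every intermediate t-structure $(u,v)$ in $\mathsf{D}^b(R)$ is encoded by (equivalently, determined by) an sp-filtration, i.e.\ that $u=\mathcal{U}_\varphi\cap\mathsf{D}^b(R)$ for $\varphi(n):=\mathsf{supp}(H^n(u))$. This is precisely what is \emph{not} known for an arbitrary commutative noetherian ring: the inclusion $u\subseteq\mathcal{U}_\varphi\cap\mathsf{D}^b(R)$ is automatic, but the reverse containment (and hence $u^\perp=\mathcal{U}_\varphi^\perp$) is a hard statement. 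Indeed, it is exactly the content of the Takahashi-type classification recalled later in the paper, which requires $R$ to be CM-excellent of finite Krull dimension; for general $R$ the image of $\mathsf{f}\circ\beta\circ\mathsf{t\mbox{-}Lift}$ is not identified, and this is the reason Theorem~\ref{diagram} asserts only injectivity (not bijectivity) of $\mathsf{t\mbox{-}Lift}$. The paper avoids this by invoking \cite{MZ} directly, which establishes — without passing through sp-filtrations — that $({}^\perp(u^\perp),u^\perp)$ is a compactly generated intermediate t-structure in $\mathsf{D}(R)$ whose restriction to $\mathsf{D}^b(R)$ recovers $(u,v)$. You should replace your sp-filtration argument for $\mathsf{t\mbox{-}Lift}$ by a citation of \cite{MZ} (or reproduce the argument there), and note that the left inverse ``restrict to $\mathsf{D}^b(R)$'' also comes from that same result rather than being a formality.
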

\begin{proof}
We articulate the results in the literature that yield the diagram. Note that $\mathcal{H}$ is a bijection from Lemma \ref{int t-str}.
\begin{enumerate}
\item $\mathsf{t\mbox{-}Lift}$ is well-defined (i.e., the image is indeed a compactly generated and intermediate t-structure) by \cite{MZ}. It is an injection with a left inverse given by the intersection with $\mathsf{D}^b(R)$. Note that $\mathsf{t\mbox{-}Lift}$ is a t-structure version of the assignment $\mathsf{Lift}$ defined for torsion pairs in Theorem \ref{torsion pairs prelim}.
\item The map $\alpha$ is well-defined and injective by Theorem \ref{t-structures prelim}(3) and its surjectivity is a version of the so-called \textit{telescope problem for t-structures}. Starting with a pure-injective cosilting object $C$, it is well-known that the associated t-structure $\theta(C)$ is homotopically smashing (see \cite{SSV} for the relevant concept). There is a priori no reason for it to be compactly generated, but it is shown in \cite[Theorem 1.1]{HN} that for a commutative noetherian ring this is the case. The intermediate condition follows from the boundedness of the cosilting complex. 
\item The fact that $\beta$ is a bijection follows from \cite[Theorem 3.10]{AJS}. The inverse assignment is the obvious one: for each sp-filtration $\varphi$ consider the class 
\begin{equation}\nonumber
\mathcal{U}_\varphi:=\{X\in \mathsf{D}(R)\colon \mathsf{supp}(H^n(X))\in \varphi(n)\}.
\end{equation}
It is shown in \cite{AJS} that $(\mathcal{U}_\varphi,\mathcal{U}_\varphi^\perp)$ is indeed a compactly generated t-structure in $\mathsf{D}(R)$. Once again, if we restrict ourselves to intermediate t-structures it is easy to see that we get under $\beta$ a bijection with bounded sp-filtrations. 
\item The fact that $\gamma$ is well-defined follows from \cite[Proposition 4.7]{PaV}, based on results of \cite{HN}. It is indeed the case that for the heart $\mathcal{H}_\mathbb{T}$ of a compactly generated intermediate t-structure $\mathbb{T}$ in $\mathsf{D}(R)$, for each residue field of $R$ there is one and one only shift of it that lies in $\mathcal{H}_\mathbb{T}$. Moreover, as shown in \cite{PaV}, that shift is precisely determined by the symmetric value of $\mathsf{f}_\varphi$, where $\varphi$ is the sp-filtration associated to $\varphi$ under the correspondence of Lemma \ref{combinatorics}. Note that this is the essence of the commutativity of the bottom square of the diagram. Since all the other maps in the square are bijections, then so is $\gamma$. \qedhere
\end{enumerate}
\end{proof}

\begin{remark}\label{loc coherent heart}
It is also shown in \cite{MZ} that if $\mathbb{S}$ is an intermediate t-structure in $\mathsf{D}^b(R)$ for a coherent ring $R$ and $\mathbb{T}=\mathsf{t\mbox{-}Lift}(\mathbb{S})$, then the heart $\mathcal{H}_\mathbb{T}$ is a locally coherent Grothendieck category, whose subcategory of finitely presented objects $\mathsf{fp}(\mathcal{H}_\mathbb{T})=\mathcal{H}_\mathbb{S}$. 
\end{remark}
 
We shall use the diagram in the theorem above as a guide for the next sections. We will define an operation of mutation in $\mathsf{Cosilt}^b(\mathsf{D}(R))$, and attempt to translate it to combinatorial data in $\mathsf{Spec}(R)$ following the diagram. 

\begin{example}
Consider $\mathbb{D}$ to be the standard t-structure in $\mathsf{D}^b(R)$. Then we have that 
\begin{itemize}
\item $\mathsf{t\mbox{-}Lift}(\mathbb{D})$ is the standard t-structure in $\mathsf{D}(R)$;
\item $\mathcal{H} \circ \mathsf{t\mbox{-}Lift}(\mathbb{D})$ coincides with the complexes $X$ in $\mathsf{D}(R)$ for which $H^k(X)=0$ for all $k\neq 0$, and it is equivalent to $\mathsf{Mod}(R)$;
\item $\alpha \circ \mathsf{t\mbox{-}Lift}(\mathbb{D})$ is the (equivalence class of the) injective cogenerator of $\mathsf{Mod}(R)$;
\item $\beta\circ  \mathsf{t\mbox{-}Lift}(\mathbb{D})$ is the sp-filtration defined by 
\begin{equation}\nonumber
\varphi(n)=\begin{cases} \mathsf{Spec}(R)& {\rm if\ } n\leq -1\\ \emptyset & {\rm otherwise;}\end{cases}
\end{equation}
\item $\gamma\circ \mathcal{H}\circ  \mathsf{t\mbox{-}Lift}(\mathbb{D})$ is the bounded homomorphism of posets $\psi\colon \mathsf{Spec}(R)\longrightarrow \mathbb{Z}$ which is constant equal to $0$. 
\end{itemize}
\end{example}

\section{Mutation}\label{mutation section}

In this paper we discuss right mutation only. Left mutation is an inverse operation to right mutation, and it is also discussed in detail in \cite{ALSV}. As mentioned in the introduction, mutation is a process by which we change a component of an object of a certain nature to produce an object of the same nature. We introduce mutation of cosilting objects from two points of view and claim in Theorem \ref{pathways to mutation} that they are equivalent. 

\subsection{Mutation via approximation theory}
We begin with a traditional point of view on mutation: for a fixed object pick a part of it that we want to remain unchanged, (right or left) approximate the object with respect to the part, and consider the triangle associated to such a map. The direct sum of the two new objects appearing in that triangle is then called the left/right mutation of the fixed object with respect to the part. This is essentially the recipe followed for mutating exceptional collections in algebraic geometry (see, for example, \cite{Bondal}), or silting/cluster-tilting objects in representation theory (see, for example, \cite{BMRRT,AI}). 

We will discuss right mutations and, thus, we will only care about right approximations or precovers. Given a subcategory $\mathcal{X}$ of an additive category $\mathcal{A}$, a map $f\colon X\longrightarrow A$ is said to be an \textbf{$\mathcal{X}$-precover} of $A$ if $X$ lies in $\mathcal{X}$ and $\mathsf{Hom}_\mathcal{A}(Y,f)$ is surjective for all $Y$ in $\mathcal{X}$. Such an $\mathcal{X}$-precover $f$ is said to be an \textbf{$\mathcal{X}$-cover} if $f$ is right minimal, i.e. if for any endomorphism $g$ of $X$, if $f\circ g=f$ then $g$ is an isomorphism. If every object of $\mathcal{A}$ admits an $\mathcal{X}$-(pre)cover, then we say that $\mathcal{X}$ is \textbf{(pre)covering} in $\mathcal{A}$.

We can now describe mutation of pure-injective cosilting objects via approximations. Let $\mathcal{D}$ be a compactly generated triangulated category and $C$ an object in $\mathsf{Cosilt}^*(\mathcal{D})$. Consider the following procedure that, out of an input subject to a mutability condition, produces an output.
\begin{itemize}
\item \textbf{Input:} Consider a subcategory $\mathscr{E}$ of $\mathsf{Prod}(C)$ satisfying $\mathsf{Prod}(\mathscr{E})=\mathscr{E}$.
\item \textbf{Mutability condition:} We say that $C$ admits a right mutation with respect to $\mathscr{E}$ if $C$ admits an $\mathscr{E}$-cover.
\item \textbf{Output:} Let $\Phi\colon E_0\longrightarrow C$ be an $\mathscr{E}$-cover for $C$, and consider the triangle
\begin{equation}\nonumber
\xymatrix{E_1\ar[r]&E_0\ar[r]^\Phi&C\ar[r]&E_1[1].}
\end{equation}
Then we define the \textbf{right mutation of $C$ at $\mathscr{E}$} to be $\mu^{-}_\mathscr{E}(C):=E_0\oplus E_1$.
\end{itemize}
The following theorem justifies why this operation is well-defined within $\mathsf{Cosilt}^*(\mathcal{D})$.

\begin{theorem}\cite[Proposition 4.5]{ALSV}
Given $C$ in $\mathsf{Cosilt}^*(\mathcal{D})$ and $\mathscr{E}\subseteq\mathsf{Prod}(C)$ as above satisfying the mutability condition above, then the output $\mu^{-}_\mathscr{E}(C)$ lies in $\mathsf{Cosilt}^*(\mathcal{D})$.
\end{theorem}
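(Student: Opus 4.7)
My plan is to realise $\mu^{-}_\mathscr{E}(C) = E_0 \oplus E_1$ as the pure-injective cosilting object associated (via Theorem \ref{t-structures prelim}(2)) to an HRS-tilt of $\mathbb{T}_C = (\mathcal{U}, \mathcal{V})$ at a torsion pair in its heart $\mathcal{H}$ induced by $\mathscr{E}$. As a starting point I would locate the triangle $E_1 \to E_0 \xrightarrow{\Phi} C \to E_1[1]$ with respect to $\mathbb{T}_C$: since $E_0, C \in \mathcal{V}$ (because $E_0 \in \mathsf{Prod}(C) \subseteq \mathcal{V}$), the long exact sequence of $\mathcal{H}$-cohomology gives $E_1 \in \mathcal{V}$, with non-zero cohomology concentrated in degrees $0$ and $1$ and described respectively as the kernel and cokernel of $H^0_{\mathbb{T}_C}(\Phi)$. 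Using that $\mathsf{Hom}_\mathcal{D}(\mathscr{E}, \Phi)$ is surjective together with $\mathsf{Prod}(\mathscr{E}) = \mathscr{E}$, I would produce a candidate torsion pair $\mathbf{t} = (\mathcal{T}, \mathcal{F})$ in $\mathcal{H}$, whose torsion-free class $\mathcal{F}$ is the subcategory of $\mathcal{H}$ cogenerated by the image of $\mathscr{E}$ under $H^0_{\mathbb{T}_C}$.

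Second, I would verify that $\mathbf{t}$ is a torsion pair of \emph{finite type}. Closure of $\mathcal{F}$ under products follows from $\mathsf{Prod}(\mathscr{E}) = \mathscr{E}$, and the cover (not merely precover) hypothesis promotes this to closure under direct limits in the Grothendieck heart $\mathcal{H}$, which is Grothendieck by Theorem \ref{t-structures prelim}(1) applied to $\mathbb{T}_C$. With $\mathbf{t}$ of finite type, the HRS-tilt yields a new t-structure $\mathbb{T}' = (\mathcal{U}', \mathcal{V}')$ in $\mathcal{D}$, and I would check the three items of Theorem \ref{t-structures prelim}(1) for $\mathbb{T}'$: nondegeneracy descends from $\mathbb{T}_C$; closure of $\mathcal{V}'$ under coproducts is precisely where finite type of $\mathbf{t}$ is used; and the new heart is Grothendieck by the Happel--Reiten--Smal\o\ theorem in its compactly generated form. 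Hence $\mathbb{T}' \in \mathsf{t\mbox{-}str}_\mathcal{D}[\mathsf{Cosilt^*}]$.

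It remains to identify the cosilting object of $\mathbb{T}'$. By Theorem \ref{t-structures prelim}(2) there is a unique (up to equivalence) pure-injective cosilting $C''$ with $\mathsf{Prod}(C'') = (\mathcal{V}')^{\perp}[-1] \cap \mathcal{V}'$. A direct check using the defining triangle and the right minimality of $\Phi$ shows that both $E_0$ and $E_1$ lie in this intersection, and a generation argument drawing again on the cover property of $\Phi$ upgrades this to $\mathsf{Prod}(E_0 \oplus E_1) = \mathsf{Prod}(C'')$. Pure-injectivity of $\mu^{-}_\mathscr{E}(C)$ is then immediate from that of $C''$, so $\mu^{-}_\mathscr{E}(C) \in \mathsf{Cosilt}^*(\mathcal{D})$.

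The main obstacle, in my view, is the construction of the torsion pair in Step~1. The subtlety is that $\mathscr{E} \subseteq \mathsf{Prod}(C)$ does not in general sit inside $\mathcal{H}$ when $C$ is cosilting but not cotilting; hence $\mathcal{F}$ must be defined via the ``heart shadow'' $H^0_{\mathbb{T}_C}(\mathscr{E})$, and one must verify that this shadow still admits enough cogeneration to cut out a finite-type torsion pair. This is where the cover assumption — rather than a mere precover — plays its decisive role, and once it is secured, Steps~2 and 3 reduce to standard HRS-machinery combined with the characterisation of cosilting t-structures in Theorem \ref{t-structures prelim}.
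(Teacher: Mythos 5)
The paper itself does not prove this theorem — it is cited wholesale from \cite[Proposition 4.5]{ALSV} — so there is no line-by-line proof to compare against. Your strategy of routing through the HRS-tilt and identifying $\mu^-_\mathscr{E}(C)$ with the cosilting object of the tilted t-structure is precisely the conceptual framework the paper records in Theorem \ref{pathways to mutation}, so the overall plan is sound and consistent with \cite{ALSV}.

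However, there is a concrete error in Step 1. You claim that because $E_0, C \in \mathcal{V}$, the long exact sequence of $\mathcal{H}$-cohomology forces $E_1$ to have cohomology concentrated in degrees $0$ and $1$, with $H^0_{\mathbb{T}_C}(E_1)$ and $H^1_{\mathbb{T}_C}(E_1)$ the kernel and cokernel of $H^0_{\mathbb{T}_C}(\Phi)$. This only holds if $E_0$ and $C$ lie in the heart, i.e.\ if $C$ is cotilting. For a general cosilting $C$ one has $\mathsf{Prod}(C)\subseteq\mathcal{V}$ but \emph{not} $\mathsf{Prod}(C)\subseteq\mathcal{U}[-1]$, so $E_0$ and $C$ can have $H^n\neq 0$ for arbitrarily many $n>0$; the LES then gives $H^0(E_1)=\ker H^0(\Phi)$ but says nothing that confines the higher cohomology of $E_1$, and the identification of $H^1(E_1)$ with $\coker H^0(\Phi)$ requires $H^1(E_0)=0$, which fails. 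You acknowledge at the end that $\mathscr{E}$ need not lie in $\mathcal{H}$, but this remark sits uneasily beside the earlier cohomological claim, which implicitly assumes exactly that. The correct statement — that $H^0_{\mathbb{T}_C}$ restricts to an equivalence $\mathsf{Prod}(C)\simeq\mathsf{Inj}(\mathcal{H}_{\mathbb{T}_C})$ — gives you a class of injectives $H^0_{\mathbb{T}_C}(\mathscr{E})$ to cogenerate the torsion-free class, but it does not give you vanishing of the higher cohomology of $E_0$ or $E_1$.

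Two further steps are asserted rather than argued, and they are precisely where the substance lies: (i) that the cover hypothesis forces $\mathsf{Cogen}(H^0_{\mathbb{T}_C}(\mathscr{E}))$ to be closed under direct limits — this is the ``equivalence of mutability conditions'' of Theorem \ref{pathways to mutation}(1), which is a genuine theorem and not a routine consequence of minimality — and (ii) that $E_0\oplus E_1$ actually represents $\mathsf{Prod}(C'')=(\mathcal{V}')^{\perp}[-1]\cap\mathcal{V}'$. For (ii), showing $E_0, E_1 \in (\mathcal{V}')^{\perp}[-1]\cap\mathcal{V}'$ and the equality $\mathsf{Prod}(E_0\oplus E_1)=\mathsf{Prod}(C'')$ requires a careful analysis of maps out of $\mathcal{V}'=\mathcal{F}\ast\mathcal{V}[-1]$ into $E_0, E_1$ using both the $\mathscr{E}$-cover property and minimality; a ``direct check'' is not enough, especially once the positive-degree cohomology of $E_0$ re-enters the picture. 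As written, the proposal captures the correct architecture but leaves the load-bearing parts unproved, and Step 1 contains a cotilting-only assertion used as if it held for all cosilting objects.
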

\subsection{Mutation via torsion pairs}
We now shift our focus from objects to categorical decompositions, namely t-structures and torsion pairs. The idea of creating a t-structure out of an old one, keeping part of the heart and changing some other part (these two parts forming a torsion pair, and in a right mutation we will want to \textit{keep} the torsionfree class and \textit{change} the torsion class) follows from \cite{HRS}. This process is called (right) \textbf{HRS-tilt}, and can be succinctly described as follows. 

\begin{theorem}\cite{HRS}\label{HRS}
Let $\mathbb{T}=(\mathcal{U},\mathcal{V})$ be a t-structure in a triangulated category $\mathcal{D}$ and let $\mathbf{t}:=(\mathcal{T},\mathcal{F})$ be a torsion pair in $\mathcal{H}_\mathbb{T}$. Then the pair $(\mathcal{U}\ast  \mathcal{T},\mathcal{F}\ast (\mathcal{V}[-1]))$ is a t-structure with heart $\mathcal{F}\ast(\mathcal{T}[-1])$, called the \textbf{right HRS-tilt of $\mathbb{T}$ at $\mathbf{t}$}.
\end{theorem}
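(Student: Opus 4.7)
The plan is to verify directly the three defining properties of a t-structure for $\mathbb{T}':=(\mathcal{U}',\mathcal{V}')$ with $\mathcal{U}':=\mathcal{U}\ast\mathcal{T}$ and $\mathcal{V}':=\mathcal{F}\ast\mathcal{V}[-1]$, and then to identify the heart. The standing observations I would keep at hand are that $\mathcal{T},\mathcal{F}\subseteq\mathcal{H}_\mathbb{T}=\mathcal{U}[-1]\cap\mathcal{V}$, so $\mathcal{T}[1],\mathcal{F}[1]\subseteq\mathcal{U}$ and $\mathcal{T},\mathcal{F}\subseteq\mathcal{V}$, together with the extension-closure of both $\mathcal{U}$ and $\mathcal{V}$.

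Closure under shifts and the $\mathrm{Hom}$-vanishing are essentially bookkeeping. Shifting a triangle $U\to Y\to T\to U[1]$ with $U\in\mathcal{U}$, $T\in\mathcal{T}$ displays $Y[1]$ as an extension of $T[1]\in\mathcal{U}$ by $U[1]\in\mathcal{U}$, so $\mathcal{U}'[1]\subseteq\mathcal{U}\subseteq\mathcal{U}'$; dually $\mathcal{V}'[-1]\subseteq\mathcal{V}[-1]\subseteq\mathcal{V}'$. For $\mathrm{Hom}_\mathcal{D}(\mathcal{U}',\mathcal{V}')=0$ one reduces, via the defining $\ast$-triangles, to four pairings: $\mathcal{U}\times\mathcal{F}$, $\mathcal{U}\times\mathcal{V}[-1]$, $\mathcal{T}\times\mathcal{F}$ and $\mathcal{T}\times\mathcal{V}[-1]$. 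The first two follow from $\mathcal{F},\mathcal{V}[-1]\subseteq\mathcal{V}$ and the original orthogonality $\mathrm{Hom}(\mathcal{U},\mathcal{V})=0$; the third is the torsion-pair vanishing in $\mathcal{H}_\mathbb{T}$, which coincides with the $\mathcal{D}$-vanishing because $\mathcal{H}_\mathbb{T}$ is a full subcategory of $\mathcal{D}$; the fourth uses $\mathcal{T}[1]\subseteq\mathcal{U}$ to shift back into the original orthogonality.

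The main technical step is constructing the approximation triangle for an arbitrary $X\in\mathcal{D}$, which I would produce through two applications of the octahedral axiom. First, take the $\mathbb{T}$-truncation $U_X\to X\to V_X\to U_X[1]$. Next, truncate $V_X$ further into $H\to V_X\to V_+\to H[1]$ with $H:=\tau_\mathbb{T}^{\leq 0}V_X\in\mathcal{H}_\mathbb{T}$ (using $V_X\in\mathcal{V}$) and $V_+\in\mathcal{V}[-1]$, and apply the torsion pair to $H$ to obtain $T\to H\to F\to T[1]$. The first octahedral, applied to the composite $T\to H\to V_X$, produces an object $W$ with triangles $T\to V_X\to W\to T[1]$ and $F\to W\to V_+\to F[1]$; the latter witnesses $W\in\mathcal{F}\ast\mathcal{V}[-1]=\mathcal{V}'$. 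A second octahedral, on $X\to V_X\to W$, yields $Z$ with triangles $Z\to X\to W\to Z[1]$ and $U_X\to Z\to T\to U_X[1]$; the latter places $Z\in\mathcal{U}\ast\mathcal{T}=\mathcal{U}'$, and the former is the required $\mathbb{T}'$-truncation.

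Finally I would identify the heart $\mathcal{H}_{\mathbb{T}'}=\mathcal{U}'[-1]\cap\mathcal{V}'$. The inclusion $\mathcal{F}\ast\mathcal{T}[-1]\subseteq\mathcal{H}_{\mathbb{T}'}$ is immediate from $\mathcal{F}\subseteq\mathcal{U}[-1]$ and $\mathcal{T}[-1]\subseteq\mathcal{V}[-1]$. For the reverse, take $X\in\mathcal{U}'[-1]\cap\mathcal{V}'$ with $\mathcal{V}'$-triangle $F\to X\to V_+\to F[1]$; it suffices to show $V_+\in\mathcal{T}[-1]$. Running the long exact sequence of $\mathbb{T}$-cohomology, the factorisation $X\in\mathcal{U}[-1]\ast\mathcal{T}[-1]$ forces $H_\mathbb{T}^{i}(X)=0$ for $i\geq 2$ and $H_\mathbb{T}^{1}(X)\in\mathcal{T}$, while $V_+\in\mathcal{V}[-1]$ kills $H_\mathbb{T}^{i}(V_+)$ for $i\leq 0$. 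Combining these with the exact sequence arising from $F\to X\to V_+$ concentrates $V_+$ in $\mathbb{T}$-cohomological degree $1$ with $H_\mathbb{T}^{1}(V_+)\cong H_\mathbb{T}^{1}(X)\in\mathcal{T}$, so $V_+\in\mathcal{T}[-1]$ and hence $X\in\mathcal{F}\ast\mathcal{T}[-1]$. The only genuinely delicate moment is arranging the double octahedral cleanly so that all three refinements (the original truncation, the secondary truncation, and the torsion-pair decomposition) assemble into a single approximation triangle; the remaining checks are careful tracking of shifts through $\mathcal{T},\mathcal{F}\subseteq\mathcal{H}_\mathbb{T}$.
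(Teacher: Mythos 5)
The paper only cites \cite{HRS} for this result and gives no argument of its own, so there is nothing internal to compare against; your proof follows the standard route (verify the axioms directly, build the truncation triangle by two applications of the octahedral axiom), and almost all of it is sound. In particular the reduction of the orthogonality to four $\mathrm{Hom}$-pairings and the assembly of the truncation triangle via the double octahedral are exactly what one should do.

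The one genuine gap is in the final step of the heart computation. From ``$V_+$ has $\mathbb{T}$-cohomology concentrated in degree $1$ with $H^1_\mathbb{T}(V_+)\in\mathcal{T}$'' you infer $V_+\in\mathcal{T}[-1]$. That implication --- that an object whose $\mathbb{T}$-cohomology is concentrated in a single degree is a shift of that cohomology object --- holds for \emph{nondegenerate} t-structures but not in general, and nondegeneracy is not among the hypotheses here. (Concretely: $V_+\in\mathcal{V}[-1]$ together with $H^i_\mathbb{T}(V_+)=0$ for $i\geq 2$ produces, after one further $\mathbb{T}$-truncation, a piece of $V_+$ lying in $\mathcal{V}[-2]$ whose $\mathbb{T}$-cohomology vanishes in every degree; without nondegeneracy such a piece need not be zero.) A short fix that avoids cohomology altogether: from the rotated triangle $X\to V_+\to F[1]\to X[1]$, and the fact that both $X$ and $F[1]$ lie in $\mathcal{U}'[-1]$ (the latter because $F[1]\in\mathcal{U}\subseteq\mathcal{U}'\subseteq\mathcal{U}'[-1]$), extension-closure of $\mathcal{U}'[-1]$ gives $V_+\in\mathcal{U}'[-1]=\mathcal{U}[-1]\ast\mathcal{T}[-1]$. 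Choose a triangle $U[-1]\to V_+\to T'[-1]\to U$ with $U\in\mathcal{U}$ and $T'\in\mathcal{T}$; since $V_+\in\mathcal{V}[-1]=(\mathcal{U}[-1])^{\perp}$, the first map vanishes, the triangle splits, and $V_+$ is a direct summand of $T'[-1]$. As $\mathcal{T}$ is closed under direct summands, $V_+\in\mathcal{T}[-1]$, and the identification of the heart goes through without assuming nondegeneracy.
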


If we want this process to restrict to a given class of t-structures, say cosilting t-structures, then a mutability condition must be imposed. We can indeed describe mutation of cosilting t-structures (coming from pure-injective cosilting objects) via HRS-tilts. Let $\mathcal{D}$ be a compactly generated triangulated category and $\mathbb{T}$ a t-structure in $\mathsf{t\mbox{-}str}_\mathcal{D}[\mathsf{Cosilt^*}]$. Consider the following procedure that, out of an input subject to a mutability condition, produces an output.
\begin{itemize}
\item \textbf{Input:} Consider a torsion pair $\mathbf{t}=(\mathcal{T},\mathcal{F})$ in $\mathsf{Tors}_{\mathcal{H}_\mathbb{T}}[\mathsf{Her}]$.
\item \textbf{Mutability condition:} We say that $\mathbb{T}$ admits a right mutation with respect to $\mathbf{t}$ if $\mathbf{t}$ is of finite type, i.e. if it lies in $\mathsf{Tors}_{\mathcal{H}_\mathbb{T}}[\mathsf{Her, FT}]$. 
\item \textbf{Output:} Consider the the right HRS-tilt of $\mathbb{T}$ at $\mathbf{t}$ (see Theorem \ref{HRS}) given by the pair $\mu^-_\mathbf{t}(\mathbb{T}):=(\mathcal{U}\ast \mathcal{T},\mathcal{F}\ast (\mathcal{V}[-1]))$, which we define to be \textbf{the right mutation of $\mathbb{T}$ at $\mathbf{t}$}.
\end{itemize}
The following theorem justifies why this operation is well-defined within $\mathsf{t\mbox{-}str}_\mathcal{D}[\mathsf{Cosilt}^*]$. This result relies on a generalisation of \cite[Theorem 1.2]{PS}, which discusses when the right HRS-tilt of a Grothendieck heart is still Grothendieck.

\begin{theorem}\cite[Theorem 4.3, Proposition 4.5]{ALSV}
Given the input $(\mathbb{T},\mathbf{t})$ as above satisfying the mutability condition above, then the output $\mu^-_\mathbf{t}(\mathbb{T})$ lies in $\mathsf{t\mbox{-}str}_\mathcal{D}[\mathsf{Cosilt}^*]$.
\end{theorem}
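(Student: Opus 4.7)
The plan is to verify the three characterising properties of a cosilting t-structure supplied by Theorem~\ref{t-structures prelim}(1), applied to $\mu^-_\mathbf{t}(\mathbb{T}) = (\mathcal{U}\ast\mathcal{T},\, \mathcal{F}\ast(\mathcal{V}[-1]))$. That this pair is indeed a t-structure is given for free by Theorem~\ref{HRS}, so what remains is to establish nondegeneracy, closure of the co-aisle under coproducts in $\mathcal{D}$, and the Grothendieck property of the new heart $\mathcal{F}\ast(\mathcal{T}[-1])$.

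Nondegeneracy should follow by a sandwiching argument. Since $\mathcal{T},\mathcal{F}\subseteq\mathcal{H}_\mathbb{T}=\mathcal{U}[-1]\cap\mathcal{V}$, and both $\mathcal{U}[-1]$ and $\mathcal{V}$ are extension-closed, one obtains the containments $\mathcal{U}\subseteq\mathcal{U}\ast\mathcal{T}\subseteq\mathcal{U}[-1]$ and $\mathcal{V}[-1]\subseteq\mathcal{F}\ast(\mathcal{V}[-1])\subseteq\mathcal{V}$. Shifting these by every integer and intersecting, the nondegeneracy of $\mathbb{T}$ (guaranteed by Theorem~\ref{t-structures prelim}(1) since $\mathbb{T}\in\mathsf{t\mbox{-}str}_\mathcal{D}[\mathsf{Cosilt}^*]$) passes directly to $\mu^-_\mathbf{t}(\mathbb{T})$.

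For the closure of the co-aisle under coproducts I would argue via homotopy colimits. The input t-structure $\mathbb{T}$ is homotopically smashing, a property enjoyed by pure-injective cosilting t-structures (see \cite{SSV}), and the finite type assumption on $\mathbf{t}$ is designed precisely so that this property survives HRS-tilting. Concretely, since $\mathcal{F}$ is closed under direct limits in $\mathcal{H}_\mathbb{T}$, a directed system of extensions $F_i\to X_i\to V_i[-1]$ with $F_i\in\mathcal{F}$, $V_i\in\mathcal{V}$, admits a homotopy colimit that is again of the form $F\to X\to V[-1]$ with $F\in\mathcal{F}$ and $V\in\mathcal{V}$, using that $\mathcal{V}$ is closed under homotopy colimits. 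Because arbitrary coproducts are a special case of directed homotopy colimits, $\mathcal{V}'=\mathcal{F}\ast(\mathcal{V}[-1])$ inherits coproduct closure from $\mathcal{V}$.

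The main obstacle is showing that the new heart $\mathcal{F}\ast(\mathcal{T}[-1])$ is a Grothendieck category. This is the step that genuinely uses both halves of the mutability condition and, as signposted in the excerpt, relies on the generalisation of the Parra--Saor\'in theorem \cite[Theorem 1.2]{PS} carried out in \cite{ALSV}. In its original form that result produces Grothendieck hearts for left HRS-tilts at torsion pairs of finite type; here the corresponding right-sided, hereditary statement is what is required. The strategy I would follow is to build a generating set of $\mathcal{F}\ast(\mathcal{T}[-1])$ by combining the torsion-free quotients (with respect to $\mathbf{t}$) of a generating set of $\mathcal{H}_\mathbb{T}$ with the shifted torsion objects $T[-1]$, and then to verify exactness of direct limits in the new heart using the hereditary condition (to control the short exact sequences furnished by the torsion radical) together with the finite type condition (to commute $\mathcal{F}$ with filtered colimits). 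With these three properties in place, Theorem~\ref{t-structures prelim}(1) delivers $\mu^-_\mathbf{t}(\mathbb{T})\in\mathsf{t\mbox{-}str}_\mathcal{D}[\mathsf{Cosilt}^*]$, as desired.
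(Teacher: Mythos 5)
Your proposal is correct and follows essentially the route the paper points to: verify the three characterising conditions of Theorem~\ref{t-structures prelim}(1) for $\mu^-_\mathbf{t}(\mathbb{T})$, with nondegeneracy and coproduct closure of the co-aisle handled by sandwiching and the finite-type hypothesis respectively, and the Grothendieck property of the new heart resting on the generalisation of the Parra--Saor\'in criterion \cite[Theorem 1.2]{PS} carried out in \cite{ALSV}, exactly as the paper's discussion indicates.
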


\subsection{The two approaches are equivalent}
As announced in the preamble of this section, we discuss why the approaches to mutation defined above are two points of view on essentially the same transformation. For a fixed pure-injective cosilting complex $C$, denote by $\theta(C)$ the associated cosilting t-structure, with $\mathcal{H}_C:=\mathcal{H}_{\theta(C)}={}^{\perp_{\neq 0}}C$ the associated (Grothendieck) heart and $\mathsf{H}^0_C\colon \mathcal{D}\longrightarrow \mathcal{H}_C$ the associated cohomological functor. 
\begin{theorem}\cite[Theorems 3.5 and 4.9]{ALSV}\label{pathways to mutation}
In a compactly generated triangulated category $\mathcal{D}$, the above pathways to cosilting mutation are equivalent. More precisely, given $C$ in $\mathsf{Cosilt}^*(\mathcal{D})$ and $\mathscr{E}=\mathsf{Prod}(\mathscr{E})$ in $\mathsf{Prod}(C)$, we have that 
\begin{enumerate}
\item $C$ admits a right mutation with respect to $\mathscr{E}$ if and only if $\theta(C)$ admits a right mutation with respect to $\mathbf{t}_\mathscr{E}:=({}^\perp H^0_C(C),\mathsf{Cogen}(H^0_C(C)))$.
\item If $C$ admits a right mutation with respect to $\mathscr{E}$, then $\theta(\mu^-_\mathscr{E}(C))=\mu^{-}_{\mathbf{t}_\mathscr{E}}(\theta(C))$.

\end{enumerate}
\end{theorem}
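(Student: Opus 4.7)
The plan is to establish, in three stages, that the approximation-theoretic mutation and the torsion-theoretic HRS-tilt are two viewpoints on a single transformation of cosilting data. Throughout I work with $C$ in $\mathsf{Cosilt}^*(\mathcal{D})$, its heart $\mathcal{H}_C$ with cohomological functor $\mathsf{H}^0_C\colon\mathcal{D}\longrightarrow \mathcal{H}_C$, and a subcategory $\mathscr{E}=\mathsf{Prod}(\mathscr{E})\subseteq \mathsf{Prod}(C)$.

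First I would set up the dictionary between $\mathscr{E}$ and a hereditary torsion pair in $\mathcal{H}_C$. Since $\mathsf{H}^0_C(C)$ is an injective cogenerator of the Grothendieck category $\mathcal{H}_C$ and $\mathscr{E}\subseteq \mathsf{Prod}(C)$, the class $\mathsf{H}^0_C(\mathscr{E})$ consists of injective objects of the heart. Consequently the pair $\mathbf{t}_\mathscr{E}=({}^\perp \mathsf{H}^0_C(\mathscr{E}),\mathsf{Cogen}(\mathsf{H}^0_C(\mathscr{E})))$ is automatically a hereditary torsion pair, by the standard fact that torsion pairs cogenerated by injectives in a Grothendieck category are hereditary. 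This identifies the combinatorial inputs on both sides.

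Second I would prove the equivalence of the mutability conditions, which is the content of part (1). The key translation is that an $\mathscr{E}$-cover $\Phi\colon E_0\longrightarrow C$ in $\mathcal{D}$ descends under $\mathsf{H}^0_C$ to an $\mathsf{H}^0_C(\mathscr{E})$-cover of $\mathsf{H}^0_C(C)$, whose kernel is precisely the torsion radical of $\mathsf{H}^0_C(C)$ at $\mathbf{t}_\mathscr{E}$. Existence of such covers for the injective cogenerator is equivalent, via Gabriel--Stenstr\"om-style results, to the torsion radical commuting with direct limits, i.e.\ to $\mathbf{t}_\mathscr{E}$ lying in $\mathsf{Tors}_{\mathcal{H}_C}[\mathsf{Her, FT}]$. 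For the converse, the Grothendieck structure of $\mathcal{H}_C$ provided by Theorem \ref{t-structures prelim}(1), combined with finite type, lets one lift the canonical torsionfree-quotient sequence of $\mathsf{H}^0_C(C)$ to a triangle in $\mathcal{D}$ whose connecting map is the desired $\mathscr{E}$-cover.

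Third I would identify the two outputs, establishing part (2). Set $(\mathcal{U},\mathcal{V})=\theta(C)$ and $(\mathcal{T},\mathcal{F})=\mathbf{t}_\mathscr{E}$. Applying $\mathsf{H}^0_C$ to the defining triangle
\[
E_1\longrightarrow E_0\stackrel{\Phi}{\longrightarrow} C\longrightarrow E_1[1]
\]
recovers the canonical torsion decomposition of $\mathsf{H}^0_C(C)$ along $\mathbf{t}_\mathscr{E}$: $\mathsf{H}^0_C(E_0)$ is the torsionfree quotient and $\mathsf{H}^0_C(E_1)[1]$ carries the torsion part. Using that $E_0$ still lies in $\mathsf{Prod}(C)\subseteq \mathcal{V}$ and that $E_1$ is obtained as a cone, a direct orthogonality computation would show
\[
{}^{\perp_{\leq 0}}(E_0\oplus E_1)=\mathcal{U}\ast \mathcal{T}, \qquad {}^{\perp_{>0}}(E_0\oplus E_1)=\mathcal{F}\ast \mathcal{V}[-1],
\]
which by Theorem \ref{HRS} is precisely the right HRS-tilt $\mu^-_{\mathbf{t}_\mathscr{E}}(\theta(C))$. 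This gives both $\mu^-_\mathscr{E}(C)\in \mathsf{Cosilt}^*(\mathcal{D})$ and $\theta(\mu^-_\mathscr{E}(C))=\mu^-_{\mathbf{t}_\mathscr{E}}(\theta(C))$.

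The hard part will be step two: bridging the triangulated notion of an $\mathscr{E}$-cover of $C$ with the abelian notion of a finite-type hereditary torsion pair in $\mathcal{H}_C$. Both the descent of covers under $\mathsf{H}^0_C$ and the reverse lifting must be handled with care, and both rely on the injectivity of $\mathsf{H}^0_C(C)$ in $\mathcal{H}_C$ together with the Grothendieck hypothesis on the heart supplied by Theorem \ref{t-structures prelim}(1). Once this bridge is in place, step three amounts to unwinding the triangle $E_1\to E_0\to C\to E_1[1]$ through the standard perpendicular-category calculus.
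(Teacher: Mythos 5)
Your three-stage outline reproduces exactly the structure of the paper's proof, which separates the argument into equivalence of inputs, equivalence of mutability conditions, and equivalence of outputs, and then defers the substantive parts of the last two steps to \cite[Theorems 3.5 and 4.9]{ALSV}. Your Stage 1 (the dictionary $\mathscr{E} \leftrightarrow \mathbf{t}_\mathscr{E}$ via the equivalence $\mathsf{Prod}(C) \simeq \mathsf{Inj}(\mathcal{H}_{\theta(C)})$, using that torsion pairs cogenerated by injectives in a Grothendieck category are hereditary) is exactly what the paper does.

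However, your attempts to fill in the deferred steps contain concrete errors. In Stage 2 you assert that the kernel of the induced cover $\mathsf{H}^0_C(\Phi)\colon \mathsf{H}^0_C(E_0) \to \mathsf{H}^0_C(C)$ is the torsion radical of $\mathsf{H}^0_C(C)$ at $\mathbf{t}_\mathscr{E}$, and in Stage 3 you assert that $\mathsf{H}^0_C(E_0)$ is the torsionfree quotient of $\mathsf{H}^0_C(C)$ and that $\mathsf{H}^0_C(E_1)[1]$ is the torsion part. Neither of these is true in general. Already for $R = \mathbb{Z}$ with the standard cotilting $C = \mathbb{Q}\oplus\mathbb{Q}/\mathbb{Z}$ and $\mathscr{E} = \mathsf{Prod}(\mathbb{Q})$, the torsion pair is $(\text{torsion},\text{torsionfree})$, the torsion radical of $C$ is $\mathbb{Q}/\mathbb{Z}$, and the torsionfree quotient is $\mathbb{Q}$; but a $\mathsf{Prod}(\mathbb{Q})$-cover $E_0 \to C$ has $E_0 = \mathbb{Q}^{(\kappa)}$ of large rank (the Hom-space $\mathsf{Hom}(\mathbb{Q},\mathbb{Q}/\mathbb{Z})$ forces a much larger source than the projection $\mathbb{Q} \to \mathbb{Q}/\mathbb{Z}$ alone provides), so $\mathsf{H}^0_C(E_0)$ is \emph{not} the torsionfree quotient, and the kernel of $\mathsf{H}^0_C(\Phi)$ — which in this case equals $\mathsf{H}^0_C(E_1)$ — is a torsionfree group, not $\mathbb{Q}/\mathbb{Z}$. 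In fact $E_1$ must, up to summands, be something like $\widehat{\mathbb{Z}} = \prod_p\mathbb{Z}_p$, reflecting that the mutated cosilting is the $1$-cotilting module $\mathbb{Q}\oplus\widehat{\mathbb{Z}}$, not a shift of a torsion group. So the intermediate description by which you intend to deduce the final orthogonality computation $({}^{\perp_{\leq 0}}(E_0\oplus E_1),{}^{\perp_{> 0}}(E_0\oplus E_1)) = (\mathcal{U}\ast\mathcal{T},\mathcal{F}\ast\mathcal{V}[-1])$ is incorrect; the actual argument in \cite{ALSV} compares the two t-structures directly via properties of the minimal cover rather than by identifying $\mathsf{H}^0_C(E_0), \mathsf{H}^0_C(E_1)$ with the pieces of the torsion decomposition. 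Similarly, the equivalence ``$C$ admits an $\mathscr{E}$-cover iff $\mathbf{t}_\mathscr{E}$ has finite type'' is the genuinely hard content of statement (1), and it is not obtained by a kernel-equals-torsion-radical shortcut; this is precisely the step the present paper chooses to quote rather than reprove.
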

\begin{proof}
We begin by recalling that the assignment $\theta$ is a bijection. Fix $C$ a pure-injective cosilting object in $\mathcal{D}$ and $\mathbb{T}:=\theta(C)$ its associated t-structure.

\textbf{Equivalence of inputs}: The cohomological functor $\mathsf{H}^0_C$ associated to $\mathbb{T}$ induces an equivalence of categories between $\mathsf{Prod}(C)$ and the subcategory $\mathsf{Inj}(\mathcal{H}_{\mathbb{T}})$ of injective objects in $\mathcal{H}_\mathbb{T}$, which we know to be a Grothendieck category. Given a subcategory $\mathscr{E}=\mathsf{Prod}(\mathscr{E})$ of $\mathsf{Prod}(C)$, $H^0_\mathbb{T}(\mathscr{E})$ is then a class of injective objects in $\mathcal{H}_\mathbb{T}$ that, therefore, cogenerates a hereditary torsion pair $({}^\perp H^0_\mathbb{T}(\mathscr{E}),\mathsf{Cogen}(H^0_\mathbb{T}(\mathscr{E})))$ where $\mathsf{Cogen}(H^0_\mathbb{T}(\mathscr{E}))$ stands for the class of subobjects in $\mathcal{H}_\mathbb{T}$ of products objects in $H^0_\mathbb{T}(\mathscr{E})$. 

\textbf{Equivalence of mutability conditions}:  This is Statement (1) of our theorem; we refer to the cited reference for a complete proof. 

\textbf{Equivalence of output}: It is shown in \cite[Theorem 4.9]{ALSV} that the t-structure associated to the cosilting object $\mu^{-}_\mathscr{E}(C)$ is precisely the right HRS-tilt of $\mathbb{T}$ at the torsion pair $\mathbf{t}_\mathscr{E}$. Moreover, if $\mathscr{E}_1$ and $\mathscr{E}_2$ are subcategories of $\mathsf{Prod}(C)$ satisfying the mutability condition such that $\mathbf{t}_{\mathscr{E}_1}=\mathbf{t}_{\mathscr{E}_2}$, then $\mu^-_{\mathscr{E}_1}(C)$ and $\mu^-_{\mathscr{E}_2}(C)$ are equivalent.
\end{proof}

\begin{remark}
The condition for a subcategory $\mathscr{E}=\mathsf{Prod}(\mathscr{E})$ of $\mathsf{Prod}(C)$ to satisfy the mutability condition can be phrased topologically (in the Ziegler spectrum) if $\mathcal{H}_{\theta(C)}$ is a locally coherent Grothendieck category. We refer to \cite{ALS} for details. 
\end{remark}

\subsection{The commutative noetherian case}
Let $R$ be a commutative noetherian ring and $\mathcal{D}=\mathsf{D}(R)$. We try to understand how the action of mutation translates to actions on the vertices of the diagram of Theorem \ref{diagram}, in particular to those of combinatorial nature: $\mathsf{sp\mbox{-}filt}^b(R)$ and $\mathsf{Hom}^b_{\mathsf{pos}}(\mathsf{Spec}(R),\mathbb{Z})$. To make this translation possible, we first ensure that the input for a mutation is determined by a subset of $\mathsf{Spec}(R)$. The proof of the following lemma relies on Neeman's classification of localising subcategories in $\mathsf{D}(R)$ (see \cite{N} for details).

\begin{lemma}\cite[Proposition 4.1 and Theorem 4.5]{PaV}\label{basics on support for hearts}
Let $R$ be a commutative noetherian ring. Let $C$ be a bounded cosilting object in $\mathsf{D}(R)$ with associated heart $\mathcal{H}_{\theta(C)}$ and associated bounded homomorphism of posets $\psi_C:=\gamma(\mathcal{H}_{\theta(C)})\colon \mathsf{Spec}(R)\longrightarrow \mathbb{Z}$. The following statements hold for the assingment
\begin{equation}\nonumber
\hat{\mathfrak{s}}\colon \mathsf{Tors}_{\mathcal{H}_{\theta(C)}}[Her]\longrightarrow 2^{\mathsf{Spec}(R)}
\end{equation}
sending a hereditary torsion pair $\mathbf{t}=(\mathcal{T},\mathcal{F})$ in $\mathcal{H}_{\theta(C)}$ to $\mathsf{supp}(\mathcal{T})$. 
\begin{enumerate}
\item $\hat{\mathfrak{s}}$ is injective, with left inverse given by the formula
\begin{equation}\nonumber
\mathsf{supp}^{-1}(\hat{\mathfrak{s}}(\mathbf{t}))\cap \mathcal{H}_\mathbb{T}=\mathcal{T}.
\end{equation} 
\item Given $\mathbf{t}=(\mathcal{T},\mathcal{F})$ in $\mathsf{Tors}_{\mathcal{H}_{\theta(C)}}[\mathsf{Her}]$, we have
\begin{equation}\nonumber
\hat{\mathfrak{s}}(\mathbf{t})=\mathsf{supp}(\mathcal{T}):=\{\mathfrak{p}\in\mathsf{Spec}(R)\colon k(\mathfrak{p})[-\psi_C(\mathfrak{p})]\in\mathcal{T}\}.
\end{equation}
\item $\mathscr{O}^H(\mathsf{Spec}(R))$ is contained in the image of $\hat{\mathfrak{s}}$, and for $V$ in $\mathscr{O}^H(\mathsf{Spec}(R))$ we have that $\mathsf{supp}^{-1}(V)\cap\mathcal{H}_{\theta(C)}$ is a hereditary torsion class in $\mathcal{H}_{\theta(C)}$.
\end{enumerate}
\end{lemma}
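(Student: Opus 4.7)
My plan is to establish the three parts in the order (2), (1), (3), writing $S_\mathfrak{p} := k(\mathfrak{p})[-\psi_C(\mathfrak{p})]$ for the unique shift of the residue field at $\mathfrak{p}$ that lies in the heart $\mathcal{H}_{\theta(C)}$, guaranteed by Theorem \ref{diagram}. The central tool will be Neeman's classification of localising subcategories of $\mathsf{D}(R)$ by arbitrary subsets of $\mathsf{Spec}(R)$; hereditarity of $\mathbf{t}$ is the bridge that carries derived-categorical containments into the abelian heart.

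For (2), the inclusion $\{\mathfrak{p} : S_\mathfrak{p} \in \mathcal{T}\} \subseteq \mathsf{supp}(\mathcal{T})$ is immediate: $k(\mathfrak{p}) \otimes_R^{\mathbb{L}} S_\mathfrak{p}$ has $k(\mathfrak{p})$ as a direct summand. For the converse, pick $T \in \mathcal{T}$ with $\mathfrak{p} \in \mathsf{supp}(T)$; Neeman places $k(\mathfrak{p})$, and hence every shift $S_\mathfrak{p}$, inside the localising subcategory of $\mathsf{D}(R)$ generated by $T$. To upgrade this derived-categorical containment to the abelian statement $S_\mathfrak{p} \in \mathcal{T}$ the plan is to produce a non-zero morphism from $S_\mathfrak{p}$ into some object of $\mathcal{T}$: since $\mathsf{End}(S_\mathfrak{p}) = k(\mathfrak{p})$ is a field, the image of such a morphism equals $S_\mathfrak{p}$ itself, and hereditarity of $\mathbf{t}$ places $S_\mathfrak{p}$ in $\mathcal{T}$.

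For (1) it suffices to verify the formula $\mathsf{supp}^{-1}(\mathsf{supp}(\mathcal{T})) \cap \mathcal{H}_{\theta(C)} = \mathcal{T}$, since injectivity of $\hat{\mathfrak{s}}$ then follows automatically. The inclusion "$\supseteq$" is trivial. For "$\subseteq$", take $X$ in the heart with $\mathsf{supp}(X) \subseteq \mathsf{supp}(\mathcal{T})$, decompose via $\mathbf{t}$ as $0 \to tX \to X \to X/tX \to 0$, observe that additivity of support along triangles combined with $\mathsf{supp}(tX) \subseteq \mathsf{supp}(\mathcal{T})$ yields $\mathsf{supp}(X/tX) \subseteq \mathsf{supp}(\mathcal{T})$, and contradict the torsion-freeness of $X/tX$ at any prime in its support by (2) together with a morphism-from-$S_\mathfrak{p}$ argument analogous to the one used in (2). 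For (3), given $V \in \mathscr{O}^H(\mathsf{Spec}(R))$, the class $\mathsf{supp}^{-1}(V) \cap \mathcal{H}_{\theta(C)}$ is stable under quotients, coproducts and extensions by additivity of support along triangles; closure under sub-objects, the essential hereditarity condition, uses that specialisation-closed $V$ corresponds under Neeman to a smashing localising subcategory of $\mathsf{D}(R)$, whose localisation triangle restricts to a short exact sequence in the heart that exhibits $\mathsf{supp}^{-1}(V) \cap \mathcal{H}_{\theta(C)}$ as the torsion class of a hereditary torsion pair.

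The main obstacle is the converse direction of (2): manufacturing a non-zero morphism $S_\mathfrak{p} \to T'$ for some $T' \in \mathcal{T}$, starting only from the tensor-theoretic fact $k(\mathfrak{p}) \otimes_R^{\mathbb{L}} T \neq 0$. I expect the resolution to run through the cohomological functor $\mathsf{H}^0_C$ and the structure of $\mathsf{Prod}(C)$: applying adjunctions between derived tensor with $k(\mathfrak{p})$ and $\mathsf{Hom}$ out of $k(\mathfrak{p})$ (some form of local duality at $\mathfrak{p}$) to a suitable injective of $\mathcal{H}_{\theta(C)}$ into which $T$ injects should produce the needed morphism from $S_\mathfrak{p}$. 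Once this upgrade from tensor-support to abelian sub-object is in place, the remaining steps of (1) and (3) follow by careful bookkeeping with supports along triangles.
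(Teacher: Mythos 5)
The paper states this Lemma as a citation to \cite{PaV} without reproducing a proof, so the comparison is against the expected argument there; but your proposal has at least one outright error and one unresolved gap that prevent it from constituting a proof.

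The error is the sentence ``since $\mathsf{End}(S_\mathfrak{p}) = k(\mathfrak{p})$ is a field, the image of such a morphism equals $S_\mathfrak{p}$ itself.'' Having a division ring of endomorphisms (being a brick) does not make an object simple, and it certainly does not force every nonzero morphism out of it to be a monomorphism. Already in the standard heart $\mathsf{Mod}(\mathbb{Z})$ the object $\mathbb{Q}$ has $\mathsf{End}(\mathbb{Q})=\mathbb{Q}$ a field, yet $\mathbb{Q}\to\mathbb{Q}/\mathbb{Z}$ is nonzero and not injective; more generally $k(\mathfrak{p})$ is simple in $\mathsf{Mod}(R)$ only when $\mathfrak{p}$ is maximal. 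So even if you succeeded in manufacturing a nonzero morphism $S_\mathfrak{p}\to T'$ with $T'\in\mathcal{T}$, you could not conclude that $S_\mathfrak{p}$ embeds in $T'$, and hereditarity would then only give that some proper quotient of $S_\mathfrak{p}$ lies in $\mathcal{T}$, which is not what you need. The missing ingredient is a dichotomy statement: for a hereditary torsion pair in $\mathcal{H}_{\theta(C)}$, the object $S_\mathfrak{p}$ lies entirely in $\mathcal{T}$ or entirely in $\mathcal{F}$ (this is exactly the content the paper attributes to \cite[Proposition 4.1]{PV}/\cite{PaV} in the proof of Theorem~\ref{mutation of combinatorial data}). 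That dichotomy is a nontrivial structural fact about how $k(\mathfrak{p})$ sits inside the heart; it is not an instance of ``bricks are simple.''

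On top of that, the step you yourself flag as the ``main obstacle'' — extracting a nonzero $\mathcal{H}$-morphism from $S_\mathfrak{p}$ into the torsion class starting from $k(\mathfrak{p})\otimes_R^{\mathbb L}T\neq 0$ — is left as a hope that ``some form of local duality'' will work. Tensor-support and Hom-cosupport genuinely differ for unbounded complexes, so this needs a real argument, likely running through the local cohomology functor $\Gamma_{\{\mathfrak p\}}$ and its compatibility with the t-structure, not a one-line adjunction. Finally, in your sketch of (3) the order of reasoning is inverted: ``additivity of support along triangles'' gives closure of $\mathsf{supp}^{-1}(V)\cap\mathcal{H}$ under extensions and coproducts, but for quotients you need $\mathsf{supp}(K)\subseteq V$ for the kernel $K$, which in a general heart requires closure under subobjects \emph{first}. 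So the Neeman/smashing argument you defer to the end is actually load-bearing for the whole of (3), not just for the hereditary condition. The overall strategy (Neeman's classification as the bridge, residue fields detecting torsion classes) is in the right spirit, but as written the proof does not close.
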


Note that the Lemma above does not tell us precisely what the image of $\mathfrak{s}$ is. Nevertheless, the first statement helpfully asserts that a hereditary torsion class in $\mathcal{H}_{\theta(C)}$ is determined by its support. 

\begin{example}\label{standard}
If $E$ is an injective cogenerator of $\mathsf{Mod}(R)$, then it is a pure-injective cosilting in $\mathsf{D}(R)$ and $\theta(E)$ is nothing but the standard t-structure. In this case, $\psi_E$ is the constant function sending every prime in $\mathsf{Spec}(R)$ to zero. It is a celebrated theorem of Gabriel (\cite{G}) that the assignment of support considered above establishes a bijection between hereditary torsion classes in $\mathsf{Mod}(R)$ and specialisation-closed subsets of $\mathsf{Spec}(R)$. A recent result of Angeleri Hügel and Hrbek (\cite[Lemma 4.2]{AH1}) shows that a torsion class in $\mathsf{Mod}(R)$ is hereditary if and only if it is of finite type. We can therefore state that the assignment
\begin{equation}\nonumber
\mathfrak{s}\colon \mathsf{Tors}_{\mathsf{Mod}(R)}[\mathsf{Her}]=\mathsf{Tors}_{\mathsf{Mod}(R)}[\mathsf{Her,FT}]\longrightarrow \mathscr{O}^H(\mathsf{Spec}(R))
\end{equation}
sending a hereditary torsion pair of finite type $\mathbf{t}:=(\mathcal{T},\mathcal{F})$ to the specialisation closed subset $\mathsf{supp}(\mathcal{T})$ is a bijection. We will be able to state an analogous result for hearts of certain t-structures later on (see Theorem \ref{der equiv and mut condition}(1)). 
\end{example}

We are now able to carry right mutation, in a compatible way to what was described in the previous subsections, to the sets $\mathsf{sp\mbox{-}filt}^b(R)$ and $\mathsf{Hom}^b_{\mathsf{pos}}(\mathsf{Spec}(R),\mathbb{Z})$. It is more convenient for us to establish this transformation using mutation via torsion pairs.

\begin{theorem}\label{mutation of combinatorial data}
Let $\mathbb{T}$ be a t-structure in $\mathsf{t\mbox{-}str}_{\mathsf{D}(R)}[\mathsf{Cosilt}^*,\mathsf{Int}]=\mathsf{t\mbox{-}str}_{\mathsf{D}(R)}[\mathsf{CG},\mathsf{Int}]$, and consider the associated bounded sp-filtration $\varphi:=\beta(\mathbb{T})$ and bounded homomorphism of posets $\psi:=\mathsf{f}_\varphi$. Take as input $\mathbf{t}$ in $\mathsf{Tors}_{\mathcal{H}_\mathbb{T}}[\mathsf{Her,FT}]$ (i.e. $\mathbf{t}$ satisfies the mutatibility condition), with $\mathsf{supp}(\mathcal{T})=:W$. Consider the output $\mu^{-}_\mathbf{t}(\mathbb{T})$.
\begin{enumerate}
\item The bounded homomorphism of posets $\mu^-_W(\psi):=\gamma \mathcal{H}_{\mu^{-}_\mathbf{t}(\mathbb{T})}$ is defined by
\begin{equation}\nonumber
\mu^-_W(\psi)(\mathfrak{p})=\begin{cases}\psi(\mathfrak{p})+1&{\rm if\ \mathfrak{p}\in W}\\ \psi(\mathfrak{p})&{\rm if\ \mathfrak{p}\notin W}\end{cases}
\end{equation}
\item The sp-filtration $\mu^{-}_W(\varphi):=\beta(\mu^{-}_\mathbf{t}(\mathbb{T}))$ is defined by
\begin{equation}\nonumber
\mu^{-}_W(\varphi)(n)=(W\cap \varphi(n-1))\cup \varphi(n)
\end{equation}
\end{enumerate}
\end{theorem}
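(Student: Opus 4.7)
The plan is to establish part (1) first by analysing where each shifted residue field lands in the heart of $\mu^-_\mathbf{t}(\mathbb{T})$, and then to derive part (2) by chasing the bottom square of the diagram in Theorem~\ref{diagram}.

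For (1), the heart of the right HRS-tilt $\mu^-_\mathbf{t}(\mathbb{T})$ equals $\mathcal{F} \ast (\mathcal{T}[-1])$ by Theorem~\ref{HRS}. Since $\mu^-_\mathbf{t}(\mathbb{T})$ again lies in $\mathsf{t\mbox{-}str}_{\mathsf{D}(R)}[\mathsf{CG},\mathsf{Int}]$, Theorem~\ref{diagram} guarantees that for each $\mathfrak{p}$ there is a unique integer $\mu^-_W(\psi)(\mathfrak{p})$ such that $k(\mathfrak{p})[-\mu^-_W(\psi)(\mathfrak{p})]$ lies in this new heart, and the task reduces to identifying it via a case analysis on whether $\mathfrak{p} \in W$. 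If $\mathfrak{p} \in W$, then Lemma~\ref{basics on support for hearts}(2) places $k(\mathfrak{p})[-\psi(\mathfrak{p})]$ in $\mathcal{T}$, so $k(\mathfrak{p})[-(\psi(\mathfrak{p})+1)] \in \mathcal{T}[-1] \subseteq \mathcal{F} \ast (\mathcal{T}[-1])$ and uniqueness gives $\mu^-_W(\psi)(\mathfrak{p}) = \psi(\mathfrak{p})+1$. If $\mathfrak{p} \notin W$, the same lemma tells us $k(\mathfrak{p})[-\psi(\mathfrak{p})] \notin \mathcal{T}$, and one must check that it actually lies in $\mathcal{F}$. For this I would consider the torsion decomposition $0 \to \tau \to k(\mathfrak{p})[-\psi(\mathfrak{p})] \to \kappa \to 0$ in $\mathcal{H}_\mathbb{T}$, use that $\tau \in \mathcal{T}$ forces $\mathsf{supp}(\tau) \subseteq W$ by Lemma~\ref{basics on support for hearts}(1), and combine this with the structural fact (coming from \cite{PaV}) that any nonzero subobject of a shifted residue field in $\mathcal{H}_\mathbb{T}$ must detect the same prime, to conclude $\mathsf{supp}(\tau) \subseteq \{\mathfrak{p}\} \cap W = \emptyset$. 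Neeman's support classification then forces $\tau = 0$, so $k(\mathfrak{p})[-\psi(\mathfrak{p})] \in \mathcal{F} \subseteq \mathcal{F} \ast (\mathcal{T}[-1])$ and $\mu^-_W(\psi)(\mathfrak{p}) = \psi(\mathfrak{p})$.

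Part (2) follows formally. The commutativity of the bottom square of the diagram in Theorem~\ref{diagram} identifies $\beta(\mu^-_\mathbf{t}(\mathbb{T}))$ with $\Phi_{\mu^-_W(\psi)}$, and substituting $\Phi_f(n) = f^{-1}(]n, +\infty[)$ reduces matters to computing $\{\mathfrak{p} : \mu^-_W(\psi)(\mathfrak{p}) > n\}$. Splitting this set according to whether $\mathfrak{p} \in W$ or $\mathfrak{p} \notin W$ using (1), the condition becomes $\mathfrak{p} \in \varphi(n-1)$ on $W$ and $\mathfrak{p} \in \varphi(n)$ off $W$, giving $(W \cap \varphi(n-1)) \cup ((\mathsf{Spec}(R) \setminus W) \cap \varphi(n))$. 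Since $\varphi$ is decreasing, $W \cap \varphi(n) \subseteq W \cap \varphi(n-1)$, and the second clause absorbs the $W$-overlap, collapsing the expression to $(W \cap \varphi(n-1)) \cup \varphi(n)$.

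The main obstacle is the second case of part (1): ruling out a nonzero torsion subobject of a shifted residue field whose prime lies outside $W$. This is the only genuinely categorical step, and it relies on controlling how support propagates to subobjects of shifted residue fields inside $\mathcal{H}_\mathbb{T}$; once this structural input from \cite{PaV} is granted alongside Neeman's characterisation of the zero object by empty support, the rest of the argument is direct bookkeeping along the bijections assembled in Theorem~\ref{diagram}.
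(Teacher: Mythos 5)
Your overall strategy matches the paper's: identify the heart of the right HRS-tilt as $\mathcal{F} \ast (\mathcal{T}[-1])$, determine where each shifted residue field $k(\mathfrak{p})[-\psi(\mathfrak{p})]$ lands, and transport the answer to sp-filtrations through the bijections of Theorem~\ref{diagram}; the bookkeeping in part~(2) is essentially identical to the paper's. Where you diverge is in part~(1): the paper simply cites the dichotomy --- that $k(\mathfrak{p})[-\psi(\mathfrak{p})]$ lies in $\mathcal{T}$ when $\mathfrak{p}\in W$ and in $\mathcal{F}$ when $\mathfrak{p}\notin W$ --- as a known fact from \cite{PaV}, whereas you attempt to rederive the hard half ($\mathfrak{p}\notin W\Rightarrow k(\mathfrak{p})[-\psi(\mathfrak{p})]\in\mathcal{F}$) from scratch.

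That rederivation contains a genuine error. The ``structural fact'' you invoke, that any nonzero subobject $\tau$ of $k(\mathfrak{p})[-\psi(\mathfrak{p})]$ in $\mathcal{H}_\mathbb{T}$ has $\mathsf{supp}(\tau)\subseteq\{\mathfrak{p}\}$, is false already in the standard heart $\mathsf{Mod}(R)$: for a non-maximal prime $\mathfrak{p}$, $R/\mathfrak{p}$ embeds in $k(\mathfrak{p})$ yet $\mathsf{supp}(R/\mathfrak{p})=V(\mathfrak{p})\supsetneq\{\mathfrak{p}\}$. The containment that is actually available runs the other way: $\mathfrak{p}$ lies in the support of every nonzero subobject of the shifted residue field. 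With that corrected input, a nonzero torsion subobject $\tau$ yields $\mathfrak{p}\in\mathsf{supp}(\tau)\subseteq W$, contradicting $\mathfrak{p}\notin W$ directly --- there is no need to pass through empty support and Neeman's classification. Moreover, even the corrected ``$\mathfrak{p}$ lies in the support of every nonzero subobject'' is itself a nontrivial property of the heart $\mathcal{H}_\mathbb{T}$ (it amounts to $k(\mathfrak{p})[-\psi(\mathfrak{p})]$ being torsionfree for every hereditary torsion pair whose support misses $\mathfrak{p}$), which is exactly what the cited result of \cite{PaV} provides; your sketch gestures at it but does not supply an independent proof of it.
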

\begin{proof}
(1) By the construction of  $\mu^{-}_\mathbf{t}(\mathbb{T})$ we know that $\mathcal{H}_ {\mu^{-}_\mathbf{t}(\mathbb{T})}=\mathcal{F}\ast\mathcal{T}[-1]$. It is known (see \cite[Proposition 4.1]{PV}) that for any given prime ideal $\mathfrak{p}$, the shift of $k(\mathfrak{p})$ that lies in $\mathcal{H}_\mathbb{T}$ either lies in $\mathcal{T}$ (and this happens if and only if $\mathfrak{p}$ lies in $W$) or it lies in $\mathcal{F}$ (and this happens if and only if $\mathfrak{p}$ does not lie in $W$). By definition of the assignment $\gamma$, we obtain the description of $\mu^-_W(\psi)(\mathfrak{p})$ as claimed.

(2) We determine $\mu^{-}_W(\varphi)$ by determining $\mathsf{f}^{-1}(\mu^-_W(\psi))$. As described in Lemma \ref{combinatorics}, $\mu^{-}_W(\varphi)(n)$ can be calculated as $\mu^-_W(\psi)^{-1}(]n,+\infty[)$, i.e. the set of primes $\mathfrak{p}$ for which $\mu^-_W(\psi)(\mathfrak{p})> n$. These are precisely the primes $\mathfrak{p}$ that lie in $W$ and for which $\psi(\mathfrak{p})> n-1$ or the primes $\mathfrak{p}$ that do not lie in $W$ and for which $\psi(\mathfrak{p})> n$. In other words, we have
\begin{align*}
\mu^{-}_W(\varphi)(n)&=(\varphi(n-1)\cap W)\cup (\varphi(n)\cap (\mathsf{Spec}(R)\setminus W))\\
&=(W\cap \varphi(n-1))\cup \varphi(n).\qedhere
\end{align*}
\end{proof}

\begin{remark}
Note that there is a mild discrepancy in the notations $\mu^-_{\mathscr{E}}$, $\mu^-_\mathbf{t}$ and $\mu^-_W$. In the first case, the class in subscript indicates the part of the cosilting object we want mutation to keep; in the second case we write the whole torsion pair in subscript, indicating therefore both the part to be kept (torsionfree class) and the part to change (torsion class); in the third case we indicate the part of $\mathsf{Spec}(R)$ whose values (of a given bounded homomorphism os poset) will be changed. Despite the discrepancy, we hope that these choices allows notation to be more manageable in each setting.
\end{remark}

Note that in order to fully describe the mutation process in $\mathsf{Hom}_{\mathsf{pos}}(\mathsf{Spec}(R),\mathbb{Z})$ or in  $\mathsf{sp\mbox{-}filt}^b(R)$, we need to be able to identify the mutability condition for the subset $W$ of $\mathsf{Spec}(R)$, i.e. to describe the properties of a subset $W$ of $\mathsf{Spec}(R)$ which is the support of a torsion class $\mathcal{T}$ of a torsion pair $\mathbf{t}$ in $\mathsf{Tors}_{\mathcal{H}_\mathbb{T}}[\mathsf{Her,FT}]$, where $\mathbb{T}$ is a t-structures in $\mathsf{t\mbox{-}str}_{\mathsf{D}(R)}[\mathsf{Cosilt}^*,\mathsf{Int}]$. We will see that this can be done under some assumptions on $\mathbb{T}$.

\section{Derived equivalences}\label{der eq section}

\subsection{t-structures inducing derived equivalences} Our current understanding of how mutation acts on all vertices of the diagram of Theorem \ref{diagram} is entangled with the study of t-structures inducing derived equivalences. We say that a t-structure $\mathbb{T}$ in a triangulated category $\mathcal{D}$ \textbf{induces a derived equivalence} if the heart $\mathcal{H}_\mathbb{T}$ of $\mathbb{T}$ is such that the inclusion $\epsilon_\mathbb{T}\colon \mathcal{H}_\mathbb{T}\longrightarrow \mathcal{D}$ extends to a fully faithful triangle functor $F_\mathbb{T}\colon \mathsf{D}^b(\mathcal{H}_\mathbb{T})\longrightarrow \mathcal{D}$. Note that the terminology may be slightly misleading: $F_\mathbb{T}$ does not necessarily need to be an equivalence, but it of course induces an equivalence between $\mathsf{D}^b(\mathcal{H})$ and the essential image of $F_\mathbb{T}$. It is well-known from \cite{BBDG} that if $\mathbb{T}$ is a bounded t-structure and $F_\mathbb{T}$ is faithful, then the essential image of $F_\mathbb{T}$ is $\mathcal{D}$ itself. Note that all t-structures in $\mathsf{t\mbox{-}str}_{\mathsf{D}^b(R)}[\mathsf{Int}]$ are bounded.

\begin{example}
Let us discuss how derived equivalences of rings (derived Morita theory) fit into this framework. Let $R$ be a ring and consider a tilting complex $T$ in the sense of Rickard (\cite{Rick}), i.e., $T$ is a bounded complex of finitely generated projective $R$-modules such that $\mathsf{Hom}_{\mathsf{D}(R)}(T,T[n])=0$ for all $n\neq 0$ and for which the smallest thick subcategory containing $T$ is precisely the category of compact objects in $\mathsf{D}(R)$. Then we know that:
\begin{enumerate}
\item The pair $\mathbb{T}:=(\mathsf{T}^{\perp_{\geq 0}},\mathsf{T}^{\perp_{<0}})$ is a t-structure in $\mathsf{D}(R)$;
\item The heart $\mathcal{H}_\mathbb{T}$ of $\mathbb{T}$ is cocomplete and has a small projective generator, namely $T$ itself. This means that $\mathcal{H}_\mathbb{T}$ is equivalent to the category of right modules over $\mathsf{End}_{\mathsf{D}(R)}(T)$;
\item There is a fully faithful triangle functor $F\colon \mathsf{D}^b(\mathcal{H}_\mathbb{T})\longrightarrow \mathsf{D}(R)$ extending the embedding $\epsilon_{\mathbb{T}}$ of $\mathcal{H}_\mathbb{T}$ into $\mathsf{D}(R)$;
\item The essential image of the functor $F$ is precisely $\mathsf{D}^b(\mathsf{Mod}(R))$ and, thus, $F$ induces a triangle equivalence $\mathsf{D}^b(\mathsf{Mod}(\mathsf{End}_{\mathsf{D}(R)}(T)))\longrightarrow \mathsf{D}^b(\mathsf{Mod}(R))$;
\item It turns out that $F$ can also be extended to a triangle equivalence 
\begin{equation}\nonumber
\hat{F}\colon \mathsf{D}(\mathsf{End}_{\mathsf{D}(R)}(T))\longrightarrow\mathsf{D}(R).
\end{equation}
\end{enumerate}
As a consequence of Rickard's work in \cite{Rick}, whenever two rings $R$ and $S$ have equivalent derived categories, there is an equivalence functor obtained by the (fairly involved) outline described above (based on \cite{BBDG}, see also \cite[Section 6]{AJSS} and \cite{AI,PV}). It is not clear whether such an equivalence functor is uniquely determined by the t-structure. 
\end{example}

In a compactly generated triangulated category, pure-injective cosilting objects are parametrising \textit{nice enough} t-structures with Grothendieck hearts (see Theorem \ref{t-structures prelim}(1)). Therefore, by determining which cosilting t-structures induce derived equivalences, we generalise the derived Morita theory of Rickard discussed above, in the sense that every module category is a Grothendieck category. This generalisation encompasses known examples of derived equivalences that do not fit into Rickard's framework. For example, if we want to discuss how the celebrated derived equivalence shown by Beilinson in \cite{Bei} between quasi-coherent sheaves on the $1$-dimensional projective space and the representations of the Kronecker quiver fits in this framework, we should bring cosilting t-structures into the picture (see for example \cite{S}).

\begin{proposition}\cite[Proposition 5.1]{PV}
Let $\mathcal{\mathcal{G}}$ be a Grothendieck abelian category, $\mathcal{D}(\mathcal{G})$ its derived category, and $C$ a cosilting object in $\mathcal{D}(\mathcal{G})$. Then $\theta(C)$ induces a derived equivalence if and only if $C$ is cotilting. 
\end{proposition}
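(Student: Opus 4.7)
The plan is to apply the standard criterion that $F_{\theta(C)}$ is fully faithful if and only if, for all $X,Y$ in the heart $\mathcal{H}_{\theta(C)}$ and all $n\geq 0$, the natural map $\mathsf{Ext}^n_{\mathcal{H}_{\theta(C)}}(X,Y)\to\mathsf{Hom}_{\mathcal{D}(\mathcal{G})}(X,Y[n])$ is an isomorphism. The driving ingredients from cosilting theory are that $\mathcal{H}_{\theta(C)}$ is Grothendieck and that $\mathsf{H}^0_C$ restricts to an equivalence $\mathsf{Prod}(C)\to\mathsf{Inj}(\mathcal{H}_{\theta(C)})$; in particular $I:=\mathsf{H}^0_C(C)$ is an injective cogenerator of $\mathcal{H}_{\theta(C)}$.

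For the implication \emph{$C$ cotilting $\Rightarrow$ $\theta(C)$ induces a derived equivalence}, the assumption $\mathsf{Prod}(C)\subseteq\mathcal{H}_{\theta(C)}$ lets one pick, for any $Y\in\mathcal{H}_{\theta(C)}$, an injective coresolution $Y\to I^\bullet$ inside $\mathcal{H}_{\theta(C)}$ with $I^k\in\mathsf{Prod}(C)$. For $X\in\mathcal{H}_{\theta(C)}={}^{\perp_{\neq 0}}C$, the cosilting condition and the continuity of $\mathsf{Hom}(X,-)$ on products give $\mathsf{Hom}_{\mathcal{D}(\mathcal{G})}(X,I^k[q])=0$ for $q\neq 0$; the hypercohomology spectral sequence degenerates onto one column and yields
\[
\mathsf{Hom}_{\mathcal{D}(\mathcal{G})}(X,Y[n])\cong H^n(\mathsf{Hom}_{\mathcal{H}_{\theta(C)}}(X,I^\bullet))=\mathsf{Ext}^n_{\mathcal{H}_{\theta(C)}}(X,Y).
\]

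For the converse, set $I:=\mathsf{H}^0_C(C)$ and, using that $C\in\mathcal{V}={}^{\perp_{>0}}C$ forces $\tau^{\leq 0}(C)=I$, consider the $\theta(C)$-truncation triangle
\[
I\longrightarrow C\longrightarrow C'\longrightarrow I[1],
\]
with $C':=\tau^{\geq 1}(C)\in\mathcal{V}[-1]$. It suffices to show $C'=0$: then $C\cong I\in\mathcal{H}_{\theta(C)}$, so $\mathsf{Prod}(C)\subseteq\mathcal{H}_{\theta(C)}$ since the Grothendieck heart is closed under products. Apply $\mathsf{Hom}_{\mathcal{D}(\mathcal{G})}(X,-)$ for $X\in\mathcal{H}_{\theta(C)}$: fully faithfulness together with injectivity of $I$ give $\mathsf{Hom}(X,I[n])=\mathsf{Ext}^n_{\mathcal{H}_{\theta(C)}}(X,I)=0$ for $n\neq 0$; the cosilting condition gives $\mathsf{Hom}(X,C[n])=0$ for $n\neq 0$; and in degree $0$ the map $\mathsf{Hom}(X,I)\to\mathsf{Hom}(X,C)$ induced by the truncation is the adjunction isomorphism coming from the right-adjointness of $\tau^{\leq 0}$ to the aisle inclusion. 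The long exact sequence then collapses to $\mathsf{Hom}_{\mathcal{D}(\mathcal{G})}(X,C'[n])=0$ for all $n\in\mathbb{Z}$ and all $X\in\mathcal{H}_{\theta(C)}$.

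The main obstacle is converting this vanishing into $C'=0$; the trick is to probe $C'$ with its own cohomology objects, leveraging the nondegeneracy of $\theta(C)$. Assume $C'\neq 0$; by nondegeneracy some $\mathsf{H}^m_C(C')$ is nonzero, and since $C'\in\mathcal{V}[-1]$ we may pick $n\geq 1$ minimal with $\mathsf{H}^n_C(C')\neq 0$. The canonical truncation map $\mathsf{H}^n_C(C')[-n]\to C'$ induces the identity on $\mathsf{H}^n_C$ and is therefore nonzero, so taking $X=\mathsf{H}^n_C(C')\in\mathcal{H}_{\theta(C)}$ produces a nonzero element of $\mathsf{Hom}_{\mathcal{D}(\mathcal{G})}(X,C'[n])$, contradicting the vanishing above. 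Hence $C'=0$, $C\in\mathcal{H}_{\theta(C)}$, and $C$ is cotilting.
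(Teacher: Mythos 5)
The overall architecture --- reducing to the $\mathsf{Ext}$-comparison criterion for full faithfulness of the realisation functor, proving the forward direction via $\mathsf{Prod}(C)$-coresolutions inside the heart, and the converse by analysing the truncation triangle of $C$ together with nondegeneracy --- is the expected route, and the forward direction as well as the bulk of the converse is correct. The difficulty is in the final step of the converse. Having shown $C'=0$ and hence $C\cong\mathsf{H}^0_C(C)\in\mathcal{H}_{\theta(C)}$, you conclude $\mathsf{Prod}(C)\subseteq\mathcal{H}_{\theta(C)}$ on the grounds that ``the Grothendieck heart is closed under products''. This is not a valid reason: $\mathcal{H}_{\theta(C)}$ is indeed a complete Grothendieck category, but its products are computed as $\mathsf{H}^0_C$ of the ambient product in $\mathcal{D}(\mathcal{G})$, and these need not coincide. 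Concretely, take $R=\mathbb{Z}$ and the HRS-tilt of the standard t-structure at the torsion pair (torsion, torsion-free) in $\mathsf{Mod}(\mathbb{Z})$: by Theorem~\ref{der equiv and mut condition}(1) this t-structure even induces a derived equivalence, yet each $\mathbb{Z}/p\mathbb{Z}[-1]$ lies in its heart while $\prod_p\mathbb{Z}/p\mathbb{Z}[-1]$ does not, because $\prod_p\mathbb{Z}/p\mathbb{Z}$ is not a torsion group. So $C\in\mathcal{H}_{\theta(C)}$ is genuinely weaker than the cotilting condition, and the last sentence needs a different justification.

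The repair uses only ideas you already have: run the identical argument with an arbitrary $P\in\mathsf{Prod}(C)$ in place of $C$. The coaisle $\mathcal{V}={}^{\perp_{>0}}C$ is a right orthogonal, hence closed under products and summands, so $P\in\mathcal{V}$ and the truncation triangle $\mathsf{H}^0_C(P)\to P\to P'$ with $P'\in\mathcal{V}[-1]$ is available; for $X\in\mathcal{H}_{\theta(C)}$ one has $\mathsf{Hom}(X,P[n])=0$ for $n\neq 0$ (since $X\in{}^{\perp_{\neq 0}}C$ and $\mathsf{Hom}(X,-)$ preserves products) and $\mathsf{Hom}(X,\mathsf{H}^0_C(P)[n])=\mathsf{Ext}^n_{\mathcal{H}_{\theta(C)}}(X,\mathsf{H}^0_C(P))=0$ for $n>0$ by full faithfulness together with the fact that $\mathsf{H}^0_C$ sends all of $\mathsf{Prod}(C)$, not only $C$, to injective objects of the heart. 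Your nondegeneracy probe then gives $P'=0$, i.e.\ $P\in\mathcal{H}_{\theta(C)}$, which is exactly the cotilting condition.
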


In this proposition, the assumption on the triangulated category (that it is the derived category of a Grothendieck category) serves the purpose of guaranteeing the existence of a so-called realisation functor, as defined in \cite{BBDG}. The result holds for other triangulated categories that admit an enhancement that is suitable for the construction of such a functor. We refer to \cite{PV} for a detailed discussion of the use of $f$-categories for this purpose (following Beilinson's appendix in \cite{Bei2}) or to \cite{Vi} for a discussion of the (significant) advantages of working in a triangulated category that lies at the base of a stable derivator (and, in fact, $\mathcal{D}(\mathcal{G})$ is one such category).

\subsection{The commutative noetherian case}
We now turn into the case of $\mathsf{D}(R)$ for a commutative noetherian ring $R$. Looking at the diagram of Theorem \ref{diagram}, we consider the subset $\mathsf{Cotilt}^*(\mathsf{D}(R))$ of $\mathsf{Cosilt}^*(\mathsf{D}(R))$. At present, we do not know how to characterise the sp-filtrations or the bounded homomorphisms of posets that are associated to $\mathsf{Cotilt}^*(\mathsf{D}(R))$. Some progress in that direction is made in \cite{HNS}. Nevertheless, what we can say (and we shall in the following theorem) is that the operation $\alpha\circ \mathsf{t\mbox{-}Lift}$ which associates a bounded (pure-injective) cosilting complex to an intermediate t-structure in $\mathsf{D}^b(R)$ factors via $\mathsf{Cotilt}^*(\mathsf{D}(R))$. In other words, this means that every t-structure in the image of $\mathsf{t\mbox{-}Lift}$ induces a derived equivalence. Moreover, for these t-structures we can parametrise precisely the torsion pairs that give rise to right mutations. Recall the assignment
\begin{equation}\nonumber
\hat{\mathfrak{s}}\colon \mathsf{Tors}_{\mathcal{H}_{\theta(C)}}[Her]\longrightarrow 2^{\mathsf{Spec}(R)}
\end{equation}
introduced in Lemma \ref{basics on support for hearts} sending a hereditary torsion pair $\mathbf{t}=(\mathcal{T},\mathcal{F})$ in $\mathcal{H}_{\theta(C)}$ (with $C$ in $\mathsf{Cosilt}^b(\mathsf{D}(R))$) to $\mathsf{supp}(\mathcal{T})$. 

\begin{theorem}\label{der equiv and mut condition}
Let $R$ be a commutative noetherian ring, $\mathbb{S}$ an intermediate t-structure in $\mathsf{D}^b(R)$ and $\mathbb{T}:=\mathsf{t\mbox{-}Lift}(\mathbb{S})$. The following statements hold.
\begin{enumerate}
\item \cite[Theorem 6.16]{PaV} The t-structure $\mathbb{T}$ induces a derived equivalence.
\item\cite[Corollary 6.18]{PaV} The assignment $\hat{\mathfrak{s}}$ restricts to a bijection
\begin{equation}\nonumber 
\mathfrak{s}\colon \mathsf{Tors}_{\mathcal{H}_\mathbb{T}}[\mathsf{Her},\mathsf{FT}]\longrightarrow \mathscr{O}^H(\mathsf{Spec}(R)).
\end{equation}

\end{enumerate}
\end{theorem}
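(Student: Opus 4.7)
The plan is to handle (1) first and then feed it into (2).

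For part (1), the approach is via the cotilting criterion: since $\mathcal{H}_\mathbb{T}$ is a Grothendieck category and $C := \alpha(\mathbb{T}) \in \mathsf{Cosilt}^b(\mathsf{D}(R))$ is pure-injective, \cite[Proposition 5.1]{PV} reduces the claim to showing that $C$ is cotilting, i.e., $\mathsf{Prod}(C) \subseteq \mathcal{H}_\mathbb{T}$. The vanishing $\mathsf{Hom}_{\mathsf{D}(R)}(C, C[i]) = 0$ for $i > 0$ is automatic from the cosilting property, so it suffices to check the $i < 0$ part. To do this, I would exploit the explicit parametrisation of Theorem \ref{diagram}: up to $\mathsf{Prod}$, $C$ is assembled from shifted injective envelopes $E(R/\mathfrak{p})[-\psi(\mathfrak{p})]$, where $\psi = \gamma(\mathcal{H}_\mathbb{T})$ is a \emph{bounded increasing} poset map $\mathsf{Spec}(R) \to \mathbb{Z}$. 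The required Ext-vanishing then reduces to Matlis-theoretic computations of $\mathsf{Ext}_R^n(E(R/\mathfrak{p}), E(R/\mathfrak{q}))$, with the monotonicity of $\psi$ ensuring that the relative shifts cannot produce negative Ext contributions.

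For part (2), injectivity of the restricted assignment $\mathfrak{s}$ is already provided by Lemma \ref{basics on support for hearts}(1). For the image to land in $\mathscr{O}^H(\mathsf{Spec}(R))$, I would argue as follows: by Remark \ref{loc coherent heart}, $\mathcal{H}_\mathbb{T}$ is locally coherent with $\mathsf{fp}(\mathcal{H}_\mathbb{T}) = \mathcal{H}_\mathbb{S}$, and since $R$ is noetherian this is in fact locally noetherian. Theorem \ref{torsion pairs prelim} then identifies $(\mathcal{T}, \mathcal{F}) \in \mathsf{Tors}_{\mathcal{H}_\mathbb{T}}[\mathsf{Her}, \mathsf{FT}]$ with a hereditary torsion pair $(\mathsf{t}, \mathsf{f})$ in $\mathcal{H}_\mathbb{S}$, with $\mathsf{supp}(\mathcal{T}) = \mathsf{supp}(\mathsf{t})$. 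Since $\mathsf{t} \subseteq \mathcal{H}_\mathbb{S} \subseteq \mathsf{D}^b(R)$, each object of $\mathsf{t}$ has finitely generated cohomology modules and therefore specialisation-closed support, whence $\mathsf{supp}(\mathsf{t}) \in \mathscr{O}^H(\mathsf{Spec}(R))$.

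For the surjectivity onto $\mathscr{O}^H$, Lemma \ref{basics on support for hearts}(3) already provides a hereditary torsion class $\mathcal{T}_V := \mathsf{supp}^{-1}(V) \cap \mathcal{H}_\mathbb{T}$ with support $V$, so the remaining point is that it is of finite type. By Theorem \ref{torsion pairs prelim}, this amounts to exhibiting $\mathcal{T}_V$ as the direct-limit closure of $\mathcal{T}_V \cap \mathcal{H}_\mathbb{S}$. Here I would use part (1): the derived equivalence identifies $\mathcal{H}_\mathbb{S}$ (as $\mathsf{fp}(\mathcal{H}_\mathbb{T})$) with a heart inside $\mathsf{D}^b(R)$, and finitely generated $R$-modules supported in $V$ (whose sufficiency is Gabriel's classification for $\mathsf{Mod}(R)$) can be suitably truncated and transported through the equivalence to produce a generating family in $\mathcal{T}_V \cap \mathcal{H}_\mathbb{S}$. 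The hardest step is part (1): establishing the cotilting property of the bounded cosilting complex $C$ demands either a careful Ext calculation using the Matlis structure of the injective envelopes together with the monotonicity of $\psi$, or (more conceptually) the construction of a realisation functor via the $f$-category or stable derivator enhancement of $\mathsf{D}(R)$ reviewed in Section \ref{der eq section}. Once (1) is in hand, part (2) follows by combining the locally noetherian structure (Remark \ref{loc coherent heart}), the Crawley-Boevey lifting correspondence (Theorem \ref{torsion pairs prelim}), and Gabriel's theorem.
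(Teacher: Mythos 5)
Your proposed route for part (1) rests on a false premise. The claim that a bounded cosilting complex $C=\alpha(\mathbb{T})$ is, up to $\mathsf{Prod}$, ``assembled from shifted injective envelopes $E(R/\mathfrak{p})[-\psi(\mathfrak{p})]$'' is not provided by Theorem~\ref{diagram} and is simply not true. Consider $R=\mathbb{Z}$ and $\mathbb{T}$ the $\mathsf{t\mbox{-}Lift}$ of the HRS-tilt of the standard t-structure at the torsion pair (torsion, torsionfree); here $\psi(0)=0$ and $\psi(p)=1$ for each prime $p$, so your candidate would be $\mathbb{Q}\oplus\bigoplus_p\mathbb{Z}_{p^\infty}[-1]$. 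A direct check of the coaisle ${}^{\perp_{>0}}(-)$ shows this object is not cosilting, and it is certainly not cotilting since $\prod_p\mathbb{Z}_{p^\infty}$ is not a torsion group. The actual cotilting object here is $\mathbb{Q}\oplus\hat{\mathbb{Z}}$ (with $\hat{\mathbb{Z}}=\prod_p\mathbb{Z}_p$), a module concentrated in degree zero built from \emph{adic completions}, not from shifted injective envelopes. In general the cosilting/cotilting associated to an sp-filtration involves local cohomology/completion complexes and is not $\mathsf{Prod}$-equivalent to a sum of shifted indecomposable injectives, so the Matlis Ext-computation you envisage has nothing to compute on. Your fallback (``construct a realisation functor via $f$-categories or derivators'') only constructs the functor $F_\mathbb{T}$; it gives no reason for $F_\mathbb{T}$ to be fully faithful, which is exactly the content of (1). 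The paper's actual proof takes a completely different path: it exhibits $\mathbb{T}$ as an iterated right HRS-tilt of a shifted standard t-structure along the steps of the associated bounded sp-filtration, shows inductively (using $\mathbb{T}=\mathsf{t\mbox{-}Lift}(\mathbb{S})$) that each torsion pair in the iteration is lifted from the finitely presented subcategory, and then applies the Chen--Han--Zhou criterion (\cite{CHZ}) step by step to propagate the derived equivalence.

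Your part (2) is closer in spirit, but two steps are glossed. For the image to lie in $\mathscr{O}^H(\mathsf{Spec}(R))$, you assert $\mathsf{supp}(\mathcal{T})=\mathsf{supp}(\mathsf{t})$ and that supports of objects of $\mathcal{H}_\mathbb{S}$ are specialisation-closed; the latter is fine, but the former requires an argument --- the paper instead passes to the localising subcategories generated by $\mathcal{T}$ and by $\mathcal{T}\cap\mathcal{H}_\mathbb{S}$, shows they coincide and are compactly generated (\cite[Proposition 3.10]{AJS}), and then invokes Neeman's classification. For surjectivity, ``truncating and transporting through the equivalence'' does not explain why $\mathsf{supp}^{-1}(V)\cap\mathcal{H}_\mathbb{T}$ equals the direct-limit closure of its finitely presented part; the paper again deduces this from Neeman's classification of smashing subcategories together with the derived equivalence established in part (1). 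So the overall skeleton of (2) is right, but both the well-definedness and the surjectivity hinge on the smashing/localising classification, which your outline does not invoke.
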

Note that part (2) of the theorem generalises the statement of Gabriel in \cite{G}, reviewed in Example \ref{standard}.
\begin{proof}
 We provide an outline of the steps of the proof, pointing the reader to some of the main ingredients (for further details we refer to \cite{PaV}).
 
 \begin{enumerate}
\item[\underline{Step 1}] \textit{$\mathbb{T}$ is an iterated right mutation of a shift of the standard t-structure, and each hereditary torsion pair of finite type involved in this iteration is in the image of the $\mathsf{Lift}$ assignment in the corresponding heart.} 
\end{enumerate}
Recall from Lemma \ref{basics on support for hearts}(2) that $\mathsf{supp}^{-1}(V)$ is a hereditary torsion class in $\mathcal{H}_\mathbb{T}$, for any $V\subseteq \mathsf{Spec}(R)$ specialisation-closed. Using the classification of compactly generated intermediate t-structures in $\mathsf{D}(R)$ in terms of sp-filtrations, we get a precise recipe of how to iteratively build $\mathbb{T}$ via HRS-tilts at hereditary torsion pairs (since each step of the filtration is specialisation-closed). Since at each step of the iteration we still have a compactly generated t-structure, it follows from \cite[Proposition 6.1]{SSV} that the hereditary torsion pairs we are tilting at are of finite type. Note that this argument, so far, applies to any intermediate compactly generated t-structure. It remains to see that each torsion pair in this iteration restricts to the subcategory of finitely presented objects in the corresponding heart. This can be proved inductively on the length of the (bounded) sp-filtration, by using the fact that $\mathbb{T}=\mathsf{t\mbox{-}Lift}(\mathbb{S})$. 

\begin{enumerate}
\item[\underline{Step 2}] \textit{Assertion (1) holds.} 
\end{enumerate}
We again use an inductive argument on the length of the bounded sp-filtration associated to $\mathbb{T}$. The key for the inductive step is the criterion established in \cite{CHZ} for when an HRS-tilt at a torsion pair induces a derived equivalence. This criterion turns out to be verified precisely because the iterated right mutations occur at torsion pairs that are lifted from torsion pairs in the subcategory of finitely presented objects, as discussed in Step 1.

Consider now the diagram induced by the injective map $\hat{\mathfrak{s}}$ (see Lemma \ref{basics on support for hearts}), 
\begin{equation}\nonumber
\xymatrix{\mathsf{Tors}_{\mathcal{H}_\mathbb{T}}[\mathsf{Her},\mathsf{FT}]\ar@{^{(}->}[d]_{\mathsf{inc}}\ar@{-->}[rr]^{\mathfrak{s}}&&\mathscr{O}^H(\mathsf{Spec}(R))\ar@{^{(}->}[d]_{\mathsf{inc}}\\ \mathsf{Tors}_{\mathcal{H}_\mathbb{T}}[\mathsf{Her}]\ar[rr]^{\hat{\mathfrak{s}}}&&2^{\mathsf{Spec}(R)}}
\end{equation}
where $\mathsf{inc}$ denotes inclusion maps. As recalled above $\mathsf{supp}^{-1}(V)\cap \mathcal{H}_\mathbb{T}$ is a hereditary torsion class in $\mathcal{H}_\mathbb{T}$, for any $V$ in $\mathscr{O}^H(\mathsf{Spec}(R))$. To prove (2) we want the restriction $\mathfrak{s}$ of $\hat{\mathfrak{s}}$ to be well-defined and surjective.

\begin{enumerate}
\item[\underline{Step 3}] \textit{ $\hat{\mathfrak{s}}(\mathsf{Tors}_{\mathcal{H}_\mathbb{T}}[\mathsf{Her},\mathsf{FT}])\subseteq \mathscr{O}^H(\mathsf{Spec}(R))$, and  $\mathfrak{s}$ is well-defined.} 
\end{enumerate}
If $\mathbf{t}:=(\mathcal{T},\mathcal{F})$ lies in $\mathsf{Tors}_{\mathcal{H}_\mathbb{T}}[\mathsf{Her},\mathsf{FT}]$, since $\mathcal{H}_\mathbb{T}$ is a locally coherent category with $\mathsf{fp}(\mathcal{H}_\mathbb{T})=\mathcal{H}_\mathbb{S}$ (see Remark \ref{loc coherent heart}), we can argue that $\mathsf{supp}(\mathcal{T})=\mathsf{supp}(\mathcal{T}\cap \mathcal{H}_\mathbb{S})$. Then we observe that $\mathcal{T}$ and $\mathcal{T}\cap \mathcal{H}_\mathbb{S}$ generate the same localising subcategory in $\mathsf{D}(R)$ and that such localising subcategory must be compactly generated by \cite[Proposition 3.10]{AJS}. The result then follows from Neeman's classification of smashing subcategories (\cite{N}).

\begin{enumerate}
\item[\underline{Step 4}]: \textit{$\mathfrak{s}$ is surjective.} 
\end{enumerate}
It remains to argue that given a specialisation-closed subset $V$ of $\mathsf{Spec}(R)$, the hereditary torsion class $\mathsf{supp}^{-1}(V)\cap \mathcal{H}_\mathbb{T}$ in $\mathcal{H}_\mathbb{T}$ is indeed of finite type. For this purpose we again use the classification of smashing subcategories in terms of specialisation-closed subsets proved by Neeman in \cite{N}, and we are able to draw the desired conclusion using the fact that $\mathbb{T}$ induces a derived equivalence by Statement (1).
\end{proof}

\begin{remark}
Let $\mathbb{S}$ be an intermediate t-structure in $\mathsf{D}^b(R)$. If $\psi\colon \mathsf{Spec}(R)\longrightarrow \mathbb{Z}$ and $\varphi \colon \mathbb{Z}\longrightarrow 2^\mathsf{Spec}(R)$  are, respectively, the associated bounded homomorphism of posets and bounded sp-filtration for $\mathbb{T}:=\mathsf{t\mbox{-}Lift}(\mathbb{S})$, then the mutability condition on a subset $W$ of $\mathsf{Spec}(R)$ is made explicit in part (2) of the theorem: $\psi$ (or $\varphi$) admits a right mutation at $W$ if and only if $W$ lies in $\mathscr{O}^H(\mathsf{Spec}(R))$.
\end{remark}

The fact that $\mathbb{T}=\mathsf{t\mbox{-}Lift}(\mathbb{S})$ induces a derived equivalence ($\mathbb{S}$ intermediate t-structure in $\mathsf{D}^b(R)$) yields two triangulated equivalences, namely $\mathsf{D}^b(\mathcal{H}_\mathbb{T})\longrightarrow \mathsf{D}^b(\mathsf{Mod}(R))$ and its extension $\mathsf{D}(\mathcal{H}_\mathbb{T})\longrightarrow \mathsf{D}(R)$ (see \cite{Vi} for the construction of this functor, an \textit{unbounded realisation functor}). It is natural to ask whether these functors restrict to a triangle equivalence at the level of the bounded derived category of $\mathcal{H}_\mathbb{S}$ and the bounded derived category of finitely generated $R$-modules. The answer is positive and the key for this restriction is to be able to identify $\mathsf{D}^b(\mathcal{H}_\mathbb{S})=\mathsf{D}^b(\mathsf{fp}(\mathcal{H}_\mathbb{T}))$ intrinsically inside $\mathsf{D}^b(\mathcal{H}_\mathbb{T})$, as was done in \cite[Lemma 3.11]{HP} (indeed $\mathsf{D}^b(\mathcal{H}_\mathbb{S})$ turns out to be the subcategory of compact objects in $\mathsf{D}^b(\mathcal{H}_\mathbb{T})$).

\begin{corollary}
Let $R$ be a commutative noetherian ring and $\mathbb{S}$ an intermediate t-structure in $\mathsf{D}^b(R)$ with heart $\mathcal{H}_\mathbb{S}$. Then
\begin{enumerate}
\item \cite[Theorem 3.10]{HP} there is an equivalence $\mathsf{D}^b(\mathcal{H}_\mathbb{S})\longrightarrow \mathsf{D}^b(R)$;
\item \cite[Proposition 6.20]{PaV} the assignment of support induces a bijection between Serre subcategories of $\mathcal{H}_\mathbb{S}$ and specialisation-closed subsets of $\mathsf{Spec}(R)$;
\item $\mathsf{t\mbox{-}Lift}(\mathbb{S})$ is an iterated right mutation of a shift of the standard t-structure.
\end{enumerate}
\end{corollary}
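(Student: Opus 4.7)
The three parts all flow from the framework already established, in particular Theorem~\ref{der equiv and mut condition} and the unbounded realisation functor discussed just before the corollary. Set $\mathbb{T} := \mathsf{t\mbox{-}Lift}(\mathbb{S})$ throughout; by Remark~\ref{loc coherent heart}, $\mathsf{fp}(\mathcal{H}_\mathbb{T}) = \mathcal{H}_\mathbb{S}$, and since $R$ is noetherian, $\mathcal{H}_\mathbb{T}$ is a locally noetherian Grothendieck category.

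For part (1), Theorem~\ref{der equiv and mut condition}(1) together with the discussion preceding the corollary provides a triangle equivalence $\hat{F}\colon \mathsf{D}(\mathcal{H}_\mathbb{T}) \longrightarrow \mathsf{D}(R)$. The plan is to restrict $\hat{F}$ to the bounded derived categories on both sides, and then further to the intrinsically defined subcategories $\mathsf{D}^b(\mathcal{H}_\mathbb{S}) \subseteq \mathsf{D}^b(\mathcal{H}_\mathbb{T})$ and $\mathsf{D}^b(R) \subseteq \mathsf{D}^b(\mathsf{Mod}(R))$. The characterisation of $\mathsf{D}^b(\mathcal{H}_\mathbb{S}) = \mathsf{D}^b(\mathsf{fp}(\mathcal{H}_\mathbb{T}))$ as the subcategory of compact objects inside $\mathsf{D}^b(\mathcal{H}_\mathbb{T})$ is precisely \cite[Lemma~3.11]{HP}, and the analogous (and classical) identification holds on the target side. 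Ensuring that $\hat{F}$ interacts well with this intrinsic characterisation is the main technical point of the three parts.

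For part (2), I would compose two bijections. Theorem~\ref{torsion pairs prelim} applied to $\mathcal{H}_\mathbb{T}$ yields a bijection $\mathsf{Lift}\colon \mathsf{Tors}_{\mathcal{H}_\mathbb{S}} \longrightarrow \mathsf{Tors}_{\mathcal{H}_\mathbb{T}}[\mathsf{FT}]$. Using that direct limits preserve monomorphisms in a Grothendieck category, $\mathsf{Lift}$ carries hereditary torsion pairs to hereditary torsion pairs. In the noetherian abelian category $\mathcal{H}_\mathbb{S}$, the torsion class of a hereditary torsion pair is precisely a Serre subcategory (the torsion radical always exists, by noetherianness), so $\mathsf{Lift}$ restricts to a bijection between Serre subcategories of $\mathcal{H}_\mathbb{S}$ and $\mathsf{Tors}_{\mathcal{H}_\mathbb{T}}[\mathsf{Her},\mathsf{FT}]$. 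Composing with the bijection $\mathfrak{s}$ from Theorem~\ref{der equiv and mut condition}(2) gives the claim; the composite coincides with taking support, since support commutes with direct limits.

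For part (3), I would follow Step~1 of the proof of Theorem~\ref{der equiv and mut condition}. Let $\varphi := \beta(\mathbb{T})$ with $\varphi(a) = \mathsf{Spec}(R)$ and $\varphi(b) = \emptyset$; the starting point is the shift of the standard t-structure whose sp-filtration equals $\mathsf{Spec}(R)$ on $(-\infty,a]$ and $\emptyset$ on $[a+1,+\infty)$. Iteratively, for $k = a+1, \ldots, b$, right-mutate at the specialisation-closed subset $\varphi(k) \subseteq \mathsf{Spec}(R)$; by part~(2) applied to each intermediate t-structure (which again lies in the image of $\mathsf{t\mbox{-}Lift}$, by Step~1 of the cited proof) the mutability condition is satisfied. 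Theorem~\ref{mutation of combinatorial data}(2) then shows the mutated sp-filtration matches $\varphi$ at all indices, so after $b-a$ steps the iterated mutation recovers $\mathbb{T}$.
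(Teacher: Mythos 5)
Your approach is essentially the same as the paper's for parts (1) and (3), and differs in a small but genuine way for part (2).

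For part (1) you outline the strategy of \cite[Theorem 3.10]{HP} (restrict the unbounded realisation functor, identify $\mathsf{D}^b(\mathcal{H}_\mathbb{S})$ intrinsically via \cite[Lemma 3.11]{HP}); the paper simply cites the result. For part (3) the paper dispatches the claim with ``follows from Step 1 in the previous proof'', and your iterative computation with $\mu^-_W(\varphi)(n)=(W\cap\varphi(n-1))\cup\varphi(n)$ is a correct and useful expansion of that Step 1: mutating a shifted standard sp-filtration successively at the specialisation-closed sets $\varphi(a+1),\dots,\varphi(b)$ does reproduce $\varphi$, since $\varphi$ is decreasing.

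For part (2) you take a different route. The paper composes the bijection $\mathfrak{s}$ of Theorem \ref{der equiv and mut condition}(2) with the Herzog--Krause bijection between Serre subcategories of $\mathsf{fp}(\mathcal{H}_\mathbb{T})$ and hereditary torsion classes of finite type in $\mathcal{H}_\mathbb{T}$, which is available for $\mathcal{H}_\mathbb{T}$ merely \emph{locally coherent} — exactly what Remark \ref{loc coherent heart} guarantees. You instead build this bijection from Theorem \ref{torsion pairs prelim} (Crawley-Boevey's $\mathsf{Lift}$) together with the identification of Serre subcategories with hereditary torsion classes inside $\mathcal{H}_\mathbb{S}$. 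Both of those ingredients require $\mathcal{H}_\mathbb{T}$ to be \emph{locally noetherian} (equivalently, $\mathcal{H}_\mathbb{S}$ to be a noetherian abelian category): $\mathsf{Lift}$ is only asserted to be a bijection in that case, and the claim that every Serre subcategory of $\mathcal{H}_\mathbb{S}$ is a torsion class (existence of the torsion radical) also uses noetherianness. You assert this at the start (``since $R$ is noetherian, $\mathcal{H}_\mathbb{T}$ is locally noetherian''), but the paper only records local coherence in Remark \ref{loc coherent heart} and deliberately routes around the stronger hypothesis by citing Herzog and Krause. Your assertion is in fact true in this setting, but it needs justification (or a citation) before your chain of reductions is complete; the paper's choice of citation buys exactly the avoidance of that extra verification.
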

\begin{proof}
Note that (3) follows from Step 1 in the previous proof. We comment briefly on (2). Since we have $\mathcal{H}_\mathbb{S}=\mathsf{fp}(\mathcal{H}_\mathbb{T})$, where $\mathbb{T}:=\mathsf{t\mbox{-}Lift}(\mathbb{S})$, statement (2) follows from Statement (2) of the previous theorem combined with the fact that in a locally coherent Grothendieck category there is always a bijection between Serre subcategories of $\mathcal{H}_\mathbb{S}$ and hereditary torsion classes of finite type in $\mathcal{H}_\mathbb{T}$ (see \cite{Herzog} and \cite{KraLoc}).
\end{proof}

\section{Mutations for bounded t-functions}\label{comm mutation}

In the last section we have seen that we have a combinatorial theory of right mutations ready to be applied to the bounded homomorphisms of posets $\mathsf{Spec}(R)\longrightarrow \mathbb{Z}$ which are associated to (lifts of) intermediate t-structures in $\mathsf{D}^b(R)$. 

\begin{definition}
A function $\psi\colon \mathsf{Spec}(R)\longrightarrow \overline{\mathbb{Z}}$ is said to be a \textbf{t-function} if for a minimal inclusion of prime ideals $\mathfrak{p}\subsetneq \mathfrak{q}$ we have $\psi(\mathfrak{p})\leq \psi(\mathfrak{q})\leq \psi(\mathfrak{p})+1$. A t-function $\psi$ is said to be \textbf{bounded} if the image of $\psi$ is contained in an integer interval $[-n,n]$ for some integer $n$. We denote the set of bounded t-functions $\mathsf{Spec}(R)\longrightarrow \mathbb{Z}$ by $\mathsf{t\mbox{-}Fun}^b(R)$.
\end{definition}

Note that, in particular, $\mathsf{t\mbox{-}Fun}^b(R)\subseteq \mathsf{Hom}_\mathsf{pos}^b(\mathsf{Spec}(R),\mathbb{Z})$ and, it turns out that they often correspond precisely to the t-structures mentioned above. Note also that, for a commutative noetherian ring of finite Krull dimension, any integer valued t-function $\mathsf{Spec}(R)\longrightarrow \mathbb{Z}$ is bounded: once the values of the (finitely many) minimal prime ideals are chosen (and these choices are not in general free from each other), the Krull dimension of $R$ determines an interval in which values of a t-function can lie. 

The following theorem states precisely the correspondence between t-functions and t-structures for a large class of rings: CM-excellent rings of finite Krull dimension. Since the purpose of this article is to illustrate and simplify cosilting mutation for a large class of examples, it will suffice to say that any commutative noetherian ring of finite Krull dimension that is a quotient of a regular (or more generally, of a Cohen-Macaulay) ring is CM-excellent (see \cite{Tak}).
We refer to \cite[Theorem 6.12]{Sta}, as well as \cite[Theorem 6.9]{AJS} for some predecessors of this theorem. 
\begin{theorem}\cite[Theorem 5.5]{Tak}
Let $R$ be a commutative noetherian ring, and suppose $R$ is CM-excellent of finite Krull dimension. The assignment $\mathsf{f}\circ \beta\circ \mathsf{t\mbox{-}Lift}$ determines a bijection between $\mathsf{t\mbox{-}str}_{\mathsf{D}^b(R)}[\mathsf{Int}]$ and $\mathsf{t\mbox{-}Fun}^b(R)$.
\end{theorem}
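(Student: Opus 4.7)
The composition $\mathsf{f}\circ\beta\circ\mathsf{t\mbox{-}Lift}$ is automatically injective by Theorem \ref{diagram}: the map $\mathsf{t\mbox{-}Lift}$ is an injection and both $\beta$ and $\mathsf{f}$ are bijections. Moreover, by commutativity of the bottom square of that diagram, the composition agrees with $\gamma\circ\mathcal{H}\circ\mathsf{t\mbox{-}Lift}$. The entire content of the theorem therefore reduces to identifying the image of this composition inside $\mathsf{Hom}^b_\mathsf{pos}(\mathsf{Spec}(R),\mathbb{Z})$ as precisely the subset $\mathsf{t\mbox{-}Fun}^b(R)$. The plan is to split this identification into a necessity statement (every function in the image is a t-function) and a sufficiency statement (every bounded t-function arises this way), and the CM-excellent hypothesis will be needed in an essential way in the second.

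For necessity, I would fix $\mathbb{S}\in\mathsf{t\mbox{-}str}_{\mathsf{D}^b(R)}[\mathsf{Int}]$, set $\mathbb{T}:=\mathsf{t\mbox{-}Lift}(\mathbb{S})$ and $\psi:=\gamma(\mathcal{H}_\mathbb{T})$. For a minimal prime inclusion $\mathfrak{p}\subsetneq\mathfrak{q}$, the inequality $\psi(\mathfrak{p})\leq\psi(\mathfrak{q})$ is automatic from $\psi$ being a poset morphism, so only $\psi(\mathfrak{q})\leq\psi(\mathfrak{p})+1$ requires proof. The strategy is to test on a suitable finitely generated module whose local geometry links $\mathfrak{p}$ and $\mathfrak{q}$: under the CM-excellent hypothesis one can produce a one-dimensional Cohen-Macaulay piece $A$ (built from $R/\mathfrak{p}$ by localizing near $\mathfrak{q}$, or a similar module furnished by the Cousin complex) that fits into a short exact sequence whose outer terms have $\mathbb{T}$-cohomology concentrated in the single degrees $\psi(\mathfrak{p})$ and $\psi(\mathfrak{q})$ respectively. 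Since $A$ belongs to $\mathsf{mod}(R)\subseteq\mathsf{D}^b(R)$ and $\mathbb{T}$ restricts to the intermediate t-structure $\mathbb{S}$ on $\mathsf{D}^b(R)$, the long exact sequence of $\mathbb{T}$-cohomology associated to this short exact sequence constrains the two degrees to be adjacent, i.e.\ $\psi(\mathfrak{q})-\psi(\mathfrak{p})\leq 1$.

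For sufficiency, I would start with $\psi\in\mathsf{t\mbox{-}Fun}^b(R)$, take the corresponding bounded sp-filtration $\varphi:=\Phi_\psi$ from Lemma \ref{combinatorics}, and set $\mathbb{T}:=\beta^{-1}(\varphi)$ in $\mathsf{t\mbox{-}str}_{\mathsf{D}(R)}[\mathsf{CG,Int}]$. Since the left inverse of $\mathsf{t\mbox{-}Lift}$ is intersection with $\mathsf{D}^b(R)$, it suffices to show that $\mathbb{T}$ restricts to $\mathsf{D}^b(R)$, i.e.\ that for every $M\in\mathsf{mod}(R)$ the $\mathbb{T}$-truncations of $M$ remain in $\mathsf{D}^b(\mathsf{mod}(R))$. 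The CM-excellent hypothesis together with finite Krull dimension produces, for each such $M$, a finite Cousin-type filtration whose successive subquotients are finite-length extensions of residue fields $k(\mathfrak{p}_i)$ along a chain of primes. The t-function condition on $\psi$ then guarantees that at each step of this chain the corresponding residue fields sit in shifts of $\mathcal{H}_\mathbb{T}$ whose indices differ by at most one, so that the $\mathbb{T}$-cohomology of $M$ ends up concentrated in finitely many degrees, each finitely generated.

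The main obstacle is precisely this sufficiency step. Without CM-excellence, non-catenary behaviour of $\mathsf{Spec}(R)$ or non-finitely-generated local cohomology can force the $\mathbb{T}$-truncation of some $M\in\mathsf{mod}(R)$ to acquire cohomology in unboundedly many degrees, even when the combinatorial t-function condition on $\psi$ continues to hold; the role of CM-excellence is exactly to ensure that the Cousin filtration is finite with finitely generated successive quotients, which is what powers the inductive boundedness argument and makes the correspondence work.
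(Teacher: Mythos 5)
The paper does not prove this statement; it cites \cite[Theorem 5.5]{Tak} outright, and the only internal content is the subsequent Remark observing that restricting to intermediate t-structures on one side and bounded t-functions on the other is immediate from Theorem \ref{diagram}. So you are reconstructing Takahashi's argument blind. Your reduction — injectivity is free, so the theorem is the identification of the image inside $\mathsf{Hom}^b_{\mathsf{pos}}(\mathsf{Spec}(R),\mathbb{Z})$ as exactly $\mathsf{t\mbox{-}Fun}^b(R)$, proved by two inclusions — has the right shape, but both inclusions as written have concrete gaps. For the inclusion of the image into $\mathsf{t\mbox{-}Fun}^b(R)$, the test object you propose, an "$A$ built from $R/\mathfrak{p}$ by localizing near $\mathfrak{q}$", is not a finitely generated $R$-module once $\mathfrak{q}$ is non-maximal, so the crucial sentence "$A$ belongs to $\mathsf{mod}(R)$" is false and you cannot feed $A$ into the restricted t-structure $\mathbb{S}$ on $\mathsf{D}^b(R)$. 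Moreover, even granting a short exact sequence whose outer terms sit in single $\mathbb{T}$-degrees $\psi(\mathfrak{p})$ and $\psi(\mathfrak{q})$, the long exact sequence of $\mathbb{T}$-cohomology does not by itself force $\psi(\mathfrak{q})-\psi(\mathfrak{p})\leq 1$: a gap of two or more is perfectly consistent with a long exact sequence, and the adjacency needs an independent input — typically one shows directly that a violation of the t-function inequality forces some $\mathbb{T}$-truncation of the finitely generated module $R/\mathfrak{p}$ itself to acquire non-finitely-generated cohomology.

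For the reverse inclusion the decisive ingredient you never name is Faltings' annihilator theorem, which is precisely what the CM-excellent hypothesis buys and is the content advertised in the very title of \cite{Tak}. Truncating $M\in\mathsf{mod}(R)$ along an sp-filtration $\varphi$ is assembled from local cohomology functors $R\Gamma_{\varphi(n)}$, and Faltings' theorem is what guarantees that, in the degree window prescribed by the t-function condition, the relevant local cohomology modules of $M$ are finitely generated. Your Cousin-complex surrogate does not deliver this: the terms of a Cousin complex are sums of injective hulls $E(R/\mathfrak{p})$ — not residue fields, and not finite-length modules — so the proposed subquotients are neither in $\mathsf{mod}(R)$ nor of finite length, and "finite Cousin-type filtration whose successive subquotients are finite-length extensions of residue fields along a chain of primes" does not describe any standard filtration of a module of positive Krull dimension. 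In any case, exhibiting such a filtration only names the pieces; it does not by itself bound the $\mathbb{T}$-cohomology of $M$ or establish its finite generation. The quantitative finiteness furnished by Faltings' theorem is the missing lemma, and without it the inductive boundedness argument you sketch for sufficiency does not close.
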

\begin{remark}
Note that, in comparison to the reference cited, we have added the words \textit{intermediate} and \textit{bounded} to the t-structure side and to the t-function side, respectively. This restriction of the cited bijection is easy to observe, and it can also be seen in Theorem \ref{diagram}, where correspondence between t-structures and sp-filtrations or homomorphisms of posets associates the intermediate condition (on t-structures) to a boundedness condition (on morphisms of posets).
\end{remark}

As a consequence of this theorem and of the last section, we have a complete description of right mutations for t-functions as follows. Let $R$ be a commutative noetherian ring of finite Krull dimension which is CM-excellent and let $\psi\colon\mathsf{Spec}(R)\longrightarrow \mathbb{Z}$ be a t-function. Consider the following procedure that, out of an input subject to a mutability condition, produces an output.
\begin{itemize}
\item \textbf{Input:} Consider a subset $W$ of $\mathsf{Spec}(R)$.
\item \textbf{Mutability condition:} We say that $\psi$ admits a right mutation with respect to $W$ if $W$ is specialisation-closed.
\item \textbf{Output:} Consider the bounded homomorphism of posets $\mu^-_W(\psi)\colon \mathsf{Spec}(R)\longrightarrow \mathbb{Z}$ defined by 
\begin{equation}\nonumber
\mu^-_W(\psi)(\mathfrak{p})=\begin{cases}\psi(\mathfrak{p})+1&{\rm if\ \mathfrak{p}\in W}\\ \psi(\mathfrak{p})&{\rm if\ \mathfrak{p}\notin W}\end{cases}
\end{equation}
\end{itemize}
We saw in Theorems \ref{mutation of combinatorial data} and \ref{der equiv and mut condition} that this does indeed correspond to a mutation of the t-structure associated to $\psi$. Note that, nevertheless, the mutation of a t-function is not necessarily a t-function. This reflects the fact that not all intermediate cosilting t-structures restrict to $\mathsf{D}^b(R)$, despite the fact that all of them are iterated mutations of a shift of the standard t-structure (see Theorem \ref{der equiv and mut condition}).

We end the paper exploring this theory of right mutations in $\mathsf{D}^b(\mathbb{Z})$. Note that $\mathbb{Z}$ is regular (thus CM-excellent) and it has finite Krull dimension (equal to 1).

\begin{example}
Let $\mathbb{P}$ denote the set of prime natural numbers, and recall that, as sets, $\mathsf{Spec}(\mathbb{Z})=\{0\}\cup \mathbb{P}$ (identifying each such integer with the ideal generated by it). Observe that a bounded t-function $\psi\colon \mathsf{Spec}(Z)\longrightarrow \mathbb{Z}$ if completely determined by two pieces of information: $\psi(0)$ and a subset $U=\psi^{-1}(\psi(0)+1)$. It is clear (by definition of a t-function) that $\psi(q)=\psi(0)$ whenever $q$ is in $\mathbb{P}\setminus U$. Hence we have a bijection
\begin{equation}\nonumber
\mathsf{t\mbox{-}Fun}^b(\mathbb{Z})\longrightarrow \mathbb{Z}\times 2^{\mathbb{P}}
\end{equation}
and we will identify a bounded t-function $\psi$ with the pair $(\psi(0),\psi^{-1}(\psi(0)+1))$. 
Note now that the specialisation-closed subsets of $\mathsf{Spec}(\mathbb{Z})$ are 
\begin{equation}\nonumber
\mathscr{O}^H(\mathsf{Spec}(\mathbb{Z}))=\{\mathsf{Spec}(\mathbb{Z})\}\cup 2^{\mathbb{P}}.
\end{equation}
We compute all right mutations of a bounded t-function $(n,U)$ (with $n$ in $\mathbb{Z}$ and $U$ a subset of $\mathbb{P}$) with respect to a specialisation closed subset $W$ of $\mathsf{Spec}(\mathbb{Z})$. First observe that $\mu_{\mathsf{Spec}(\mathbb{Z})}^-(n,U)=(n+1,U)$ since we change all the values of the original t-function. Now, if $W$ is a subset of $\mathbb{P}$, then right mutation of $(n,U)$ with respect to $W$ gives us the following bounded morphism of posets:
\begin{equation}\nonumber
\mu^-_W(n,U)(p)=\begin{cases} n {\rm\ if \ } p\notin W\cup U\\ n+1 {\rm\ if \ } p\in (W\setminus U)\cup (U\setminus W),\\ n+2 {\rm\ if \ } p\in (W\cap U)\end{cases} p\in\{0\}\cup \mathbb{P}.
\end{equation}
From this result, it is worth observing that
\begin{itemize}
\item If $W=\emptyset$, then $\mu^-_\emptyset(n,U)=(n,U)$;
\item If $W\cap U=\emptyset$, then $\mu^-_W(n,U)=(n,U\cup W)$;
\item If $W\cap U\neq \emptyset$, then $\mu^-_W(n,U)$ is no longer a t-function.
\end{itemize}
\end{example}
%
%

\noindent\thanks{\textbf{Acknowledgements:}
The author thanks Lidia Angeleri Hügel, Michal Hrbek, Rosanna Laking, Sergio Pavon and Jan \v{S}\v{t}ov\'i\v{c}ek for inspiring and clarifying discussions, helpful insights and comments on earlier versions of this paper, the Scientific Committee of the International Conference on Representations of Algebras (ICRA) 2022 for the opportunity of presenting this work at the conference, and the editors of this volume for their invitation to write this article. The author acknowledges financial support
from \textit{PRIN - Progetto di Rilevante Interesse Nazionale} through the project \textit{Categories, Algebras: Ring Theoretical and Homological Approaches (CARTHA)} that allowed him to attend ICRA 2022. This work was partially  supported by the Department of Mathematics of the University of Padova via its \textit{BIRD - Budget Integrato per la Ricerca dei Dipartimenti 2022}, through the project \textit{Representations of quivers with commutative coefficients}.} 



\begin{thebibliography}{99}
\bibitem{AIR}{\sc T.~Adachi, O.~Iyama and I.~Reiten}, {\it $\tau$-tilting theory}, Compos. Math. {\bf 150} (2014), 415--452.
\bibitem{AI} {\sc T.~Aihara, O.~Iyama}, \emph{Silting mutation in triangulated categories}, J. Lond. Math. Soc. \textbf{85} (2012), no. 3, 633-668.
\bibitem{AJS} {\sc L. Alonso Tarr\'io, A. Jerem\'ias L\'opez} and {\sc M.~Saor\'in}, \textit{Compactly generated t-structures on the derived category of a noetherian ring},  J. Algebra \textbf{324} (2010), 313–346.
\bibitem{AJSS} {\sc L. Alonso Tarr\'io, A. Jerem\'ias L\'opez} and {\sc M.~Souto Salorio}, \emph{Construction of t-structures and equivalences of derived categories}, Trans.~Amer.~Math.~Soc.~\textbf{355} (2003), 2523--2543.
\bibitem{AH1}{\sc  L.~Angeleri H\"ugel, M.~Hrbek}, \emph{Silting modules over commutative rings}, {Int. Math. Res. Not.} IMRN {\bf 2017(13)} (2017),  4131--4151.
\bibitem{ALS} {\sc L.~Angeleri H\"ugel, R.~Laking} and {\sc F.~Sentieri}, \emph{Mutation of torsion pairs for finite-dimensional algebras}, in preparation.
\bibitem{ALSV} {\sc L.~Angeleri H\"ugel, R.~Laking, J.~\v{S}\v{t}ov\'i\v{c}ek} and {\sc J.Vit\'oria}, {\it Mutations and torsion pairs}, preprint (2022), \href{https://arxiv.org/abs/2201.02147}{arXiv:2201.02147}.
\bibitem{AMV3} {\sc L.~Angeleri H\"ugel, F.~Marks} and {\sc J.Vit\'oria}, \emph{Torsion pairs in silting theory}, Pacific J. Math. \textbf{291} (2017), 257--278.
\bibitem{B}{\sc S.\ Bazzoni}, {\it Cotilting modules are pure-injective}, Proc.\ Amer.\ Math.\ Soc.\ \textbf{131} (2003), no.\ 12, 3665--3672. 
\bibitem{Bei} {\sc A.~A.~Beilinson}, {\it Coherent sheaves on $\mathbb{P}^n$ and problems of linear algebra}, Func.~Anal.~Appl. \textbf{12} (1978), 214--216.
\bibitem{Bei2} {\sc A.~A.~Beilinson}, {\it On the derived category of perverse sheaves}, In: K-theory, arithmetic and geometry (Moscow, 1984-1986), Lecture Notes in Math., \textbf{1289}. Springer, Berlin (1987).
\bibitem{BBDG} {\sc A.~Beilinson, J.~Bernstein, P.~Deligne} and {\sc O. Gabber}, \emph{Analyse et topologie sur les espaces singuliers}, Ast{\'e}risque, vol. \textbf{100} (1982), Soc. Math. France.
\bibitem{Bondal} {\sc A.~I.~Bondal}, {\it Representations of associative algebras and coherent sheaves} (Russian), Izv. Akad. Nauk SSSR Ser. Mat. \textbf{53}(1) (1989), 25--44; English translation, Math. USSR-Izv. \textbf{34}(1) (1990),  23--42.
\bibitem{Br}{\sc T.~Bridgeland}, {\it t-structures on some local Calabi-Yau varieties}, J.~Algebra \textbf{289} (2005), 453--483.
\bibitem{BMRRT} {\sc A.~Buan, B.~R.~Marsh, I.~Reiten, M.~Reineke} and {\sc G.~Todorov}, \emph{Tilting theory and cluster combinatorics}, Adv. Math. \textbf{204} (2006), 572--618.
\bibitem{CHZ} {\sc X.-W.~Chen, Z.~Han} and {\sc Y.~Zhou}, \emph{Derived equivalence via HRS-tilting}, Adv. Math. \textbf{354} (2019).
\bibitem{CBlfp}{\sc W.~Crawley-Boevey}, {\it Locally finitely presented additive categories}, Comm. Algebra \textbf{22} (1994), 1641--1674.
\bibitem{G} {\sc P.~Gabriel}, {\em Des cat\'egories ab\'eliennes}, Bull. Soc. Math. France \textbf{90} (1962), 323--448.
\bibitem{HRS} {\sc D.~Happel, I.~Reiten, S.Smal{\o}}, \emph{Tilting in abelian categories and quasitilted algebras}, Mem. Amer. Math. Soc. \textbf{120}  (1996), no.~575, viii+ 88.
\bibitem{Herzog}{\sc I.~Herzog}, {\it The Ziegler spectrum of a locally coherent Grothendieck category}, Proc. Lond. Math. Soc. \textbf{74} (1997), 503--558.
\bibitem{HN} {\sc M.~Hrbek} and {\sc T.~Nakamura}, \textit{Telescope conjecture for homotopically smashing t-structures over commutative noetherian rings}, J. Pure Appl. Algebra \textbf{225}(4) (2021).
\bibitem{HNS}{\sc M.~Hrbek, T.~Nakamura} and {\sc J.~{\v S}{\v t}ov{\'\i}{\v c}ek}, {\it Tilting complexes and codimension functions over commutative noetherian rings}, preprint (2022), arXiv:2207.01309.
\bibitem{HP}{\sc M.~Hrbek} and {\sc S.~Pavon}, \emph{Singular equivalences to locally coherent hearts of commutative noetherian rings}, J.~Algebra \textbf{632} (2023), 117--153.
\bibitem{KraLoc}{\sc H.~Krause}, {\it The spectrum of a locally coherent category}, J. Pure Appl. Algebra \textbf{114} (1997), 259--271.
\bibitem{Krause-TC} {\sc H.~Krause}, \emph{Smashing subcategories and the telescope conjecture---an algebraic approach}, Invent. Math. \textbf{139} (2000), no. 1, 99--133.
\bibitem{L} {\sc R.~Laking}, \emph{Purity in compactly generated derivators and $t$-structures with Grothendieck hearts}, Math. Z. \textbf{295} (2020), 1615--1641.
\bibitem{L-Proc} {\sc R.~Laking}, \emph{Bricks and mutation}, in preparation. 
\bibitem{MV} \textsc{F.~Marks, J.~Vit\'oria}, {\em Silting and cosilting classes in derived categories}, J. Algebra {\bf 501} (2018), 526-544.
\bibitem{MZ}\textsc{F.~Marks} and \textsc{A.~Zvonareva}, \emph{Lifting and restricting t-structures}, preprint (2021), Bull.\ Lond.\ Math.\ Soc.\ 55 (2) (2023), 640--657.
\bibitem{N} {\sc A.~Neeman}, \emph{The chromatic tower for $\mathsf{D}(R)$}, Topology \textbf{31} (1992), 519--532.
\bibitem{NSZ}{\sc P. Nicol\'as, M. Saor\'in and A. Zvonareva}, {\em Silting theory in triangulated categories with coproducts}, J. Pure Appl. Algebra {\bf 223} (2019), 2273--2319.
\bibitem{PS}{\sc C.~E.~Parra} and {\sc M.~Saor\'{\i}n}, \emph{Direct limits in the heart of a $t$-structure: the case of a torsion pair}, J. Pure Appl. Algebra \textbf{219} (9) (2015), 4117--4143.
\bibitem{PS-add} {\sc C.~E.~Parra} and {\sc M.~Saor\'{\i}n}, \emph{Addendum to ``Direct limits in the heart of a $t$-structure: the case of a torsion pair''} [J. Pure Appl. Algebra \textbf{219} (9) (2015), 4117--4143],  J. Pure Appl. Algebra {\bf 220}   (6) (2016), 2467--2469. 
\bibitem{PaV} {\sc S.~Pavon} and {\sc J.~Vit\'oria}, \emph{Hearts for commutative noetherian rings: torsion pairs and derived equivalences}, Doc. Math. \textbf{26} (2021), 829--871.
\bibitem{PV} {\sc C.~Psaroudakis} and {\sc J.~Vit\'oria}, \emph{Realisation functors in tilting theory}, Math. Z. \textbf{288} (3) (2018), 965--1028.
\bibitem{Rick} {\sc J.~Rickard}, \emph{Morita Theory for Derived Categories}, J.~London Math.~Soc.~ \textbf{39} (1989), 436--456.
\bibitem{SS20} {\sc M.~Saor\'in} and {\sc J.~\v{S}\v{t}ov\'i\v{c}ek}, \emph{$t$-Structures with Grothendieck hearts via functor categories}, preprint (2020), arXiv:2003.01401.
\bibitem{SSV} {\sc M.~Saor\'in, J.~\v{S}\v{t}ov\'i\v{c}ek} and {\sc S.Virili}, \emph{ $t$-structures on stable derivators and Grothendieck categories}, Adv. Math. \textbf{429} (2023), 109--139.
\bibitem{Sta} {\sc D.~Stanley}, {\it Invariants of t-structures and classifications of nullity classes}, Adv.~Math.~\textbf{224} (2010), 2662--2689.
\bibitem{S}{\sc J.~\v{S}\v{t}ov\'i\v{c}ek}, {\it All $n$-cotilting modules are pure-injective}, Proc.~Amer.~Math.~Soc. \textbf{134}(7) (2006), 1891--1897.
\bibitem{Tak}{\sc R.~Takahashi}, {\it Faltings’ annihilator theorem and t-structures of derived categories}, Math.~Z.~\textbf{304} (2023). 
\bibitem{Vi}{\sc S.~Virili}, {\it Morita theory for stable derivators}, preprint (2018), arXiv:1807.01505
\bibitem{W}{\sc J. Woolf}, {\em Stability conditions, torsion theories and tilting}, J.\ Lond.\ Math.\ Soc.\ (2) {\bf 82} (2010), no. 3, 663--682.






\end{thebibliography}
\end{document}